\documentclass{amsart} 

\usepackage[english]{babel}
\usepackage[latin1]{inputenc}
\usepackage[autostyle, english = american]{csquotes} 

\usepackage{amsmath}
\usepackage{amsthm}
\usepackage{amssymb}
\usepackage{tikz}

\usepackage{imakeidx} 
\usepackage{appendix}
\usepackage{tikz-cd}
\usepackage{verbatim}
\usepackage{enumitem}
\usepackage{multicol}

\usepackage[backref=page]{hyperref}
\usepackage{cleveref}

\hypersetup{colorlinks=true,linkcolor=blue,anchorcolor=blue,citecolor=blue}

\usepackage{adjustbox}

\usepackage{mathtools}

\usepackage{amsfonts}

\newcommand{\overbar}[1]{\mkern 1.5mu\overline{\mkern-3mu#1\mkern-1.5mu}\mkern 1.5mu}

\newtheorem{thm}{Theorem}[section]
\newtheorem{prop}[thm]{Proposition}
\newtheorem{lemma}[thm]{Lemma}
\newtheorem{cor}[thm]{Corollary}
\newtheorem{conj}[thm]{Conjecture}
\newtheorem{question}[thm]{Question}

\theoremstyle{definition}

\newtheorem{example}[thm]{Example}
\newtheorem{claim}[thm]{Claim}

\theoremstyle{remark}
\newtheorem{rmk}[thm]{Remark}
\numberwithin{equation}{section}

\usepackage{relsize}
\usepackage[bbgreekl]{mathbbol}
\usepackage{amsfonts}
\DeclareSymbolFontAlphabet{\mathbb}{AMSb} 
\DeclareSymbolFontAlphabet{\mathbbl}{bbold}

\newcommand{\Q}{\mathbb Q}
\newcommand{\F}{\mathbb F}

\newcommand{\Z}{\mathbb Z}

\renewcommand{\phi}{\varphi}

\newcommand{\on}[1]{\operatorname{#1}}
\newcommand{\ang}[1]{\left \langle{#1}\right \rangle}

\title[Galois representations modulo $p$ that do not lift modulo $p^2$]{Galois representations modulo $p$ that do not lift modulo $p^2$}

\author{Alexander Merkurjev}
\address{Department of Mathematics\\
	University of California\\
	Los Angeles, CA 90095 \\United States of America}
\email{merkurev@math.ucla.edu}

\author{Federico Scavia}
\address{CNRS\\
	Institut Galil\'ee\\
	Universit\'e Sorbonne Paris Nord\\
	99 avenue Jean-Baptiste Cl\'ement, 93430\\ 
	Villetaneuse, France}
\email{scavia@math.univ-paris13.fr}

\makeatletter
\@namedef{subjclassname@2020}{\textup{2020} Mathematics Subject Classification}
\makeatother

\subjclass[2020]{12G05; 11F80, 12F12, 20J06}

\date{October 16, 2024}

\begin{document}

	\begin{abstract}
		For every finite group $H$ and every finite $H$-module $A$, we determine the subgroup of negligible classes in $H^2(H,A)$, in the sense of Serre, over fields with enough roots of unity. As a consequence, we show that for every odd prime $p$, every integer $n\geq 3$, and every field $F$ containing a primitive $p$-th root of unity, there exists a continuous $n$-dimensional mod $p$ representation of the absolute Galois group of $F(x_1,\dots,x_p)$ which does not lift modulo $p^2$. This answers a question of Khare and Serre, and disproves a conjecture of Florence.
	\end{abstract}
	
	\maketitle
	
	\section{Introduction}
	
	\subsection{The profinite inverse Galois problem} A central issue in modern Galois theory is the profinite inverse Galois problem, which asks how to characterize absolute Galois groups of fields among profinite groups. While an answer to this question is not known, even conjecturally, several necessary conditions for a profinite group to qualify as an absolute Galois group have been established.
	
	The most classical result in this direction is due to Artin and Schreier \cite{artin1924kennzeichnung,artin1927kennzeichnung}, who proved that every non-trivial finite subgroup of an absolute Galois group is cyclic of order $2$. A deeper necessary condition is the Bloch--Kato conjecture, now a theorem due to Voevodsky and Rost \cite{haesemeyer2019norm}, which in particular implies that the mod $p$ cohomology ring of an absolute Galois group of a field containing a primitive $p$-th root of unity is generated in degree $1$ with relations in degree $2$. As a more recent example, the Massey vanishing conjecture of Min\'{a}\v{c} and T\^{a}n \cite{minac2017triple} predicts that all non-empty Massey products of $n\geq 3$ elements in the mod $p$ Galois cohomology of fields contain the zero element. This conjecture has been proved in several cases \cite{hopkins2015splitting,minac2015kernel,minac2016triple,efrat2017triple,harpaz2019massey,merkurjev2022degenerate,merkurjev2023massey}, and each of these theorems provides new restrictions on the cohomology of absolute Galois groups of fields. 
	
	In this article, we investigate restrictions to the profinite inverse Galois problem coming from the embedding problem with abelian kernel.

	\subsection{The embedding problem with abelian kernel} 
	Let $F$ be a field, let $H$ be a finite group, and let $A$ be a finite $H$-module. Consider a group extension
	\begin{equation}\label{group-extension}
		\begin{tikzcd}
			0 \arrow[r]  & A \arrow[r] & G \arrow[r] & H \arrow[r] & 1,
		\end{tikzcd}
	\end{equation}
	where the $H$-module action on $A$ coincides with the action induced by the conjugation of $G$ on $A$. We say that the embedding problem for (\ref{group-extension}) over $F$ has a positive solution if for every field extension $K/F$, letting $\Gamma_K$ denote the absolute Galois group of $K$, every continuous homomorphism $\rho\colon \Gamma_K\to H$ lifts to a continuous homomorphism $\tilde{\rho}\colon \Gamma_K\to G$:
	\[
	\begin{tikzcd}
		&&& \Gamma_K \arrow[d,"\rho"] \arrow[dl,dotted,swap,"\tilde{\rho}"]   \\
		0 \arrow[r]  & A \arrow[r] & G \arrow[r] & H \arrow[r] & 1.
	\end{tikzcd}
	\]
	To prove that the embedding problem for (\ref{group-extension}) over $F$ has a positive solution, it is enough to lift a single continuous homomorphism $\Gamma_K\to H$, corresponding to a \enquote{generic} Galois field extension $L/K$ over $F$ with Galois group $H$; see \Cref{connection}. 
	
	This is a lifting problem. The terminology \enquote{embedding problem} is classical, and is motivated by the fact that, letting $L/K$ be a Galois $H$-algebra corresponding to $\rho$, a lifting $\tilde{\rho}$ of $\rho$ exists if and only if there exists a Galois $G$-algebra $M/K$ such that $M^A=L$ (that is, $L$ embeds in $M$ as the $A$-invariant subalgebra). We refer the reader to the monograph \cite{ishkhanov1997embedding} for further information on this classical and vast subject.
	
	\subsection{Lifting Galois representations} Extensions of the form (\ref{group-extension}) are of course very general. Before the present work, the embedding problem with abelian kernel was open even for some very basic extensions. Consider the following question of Khare and Serre (cf. \cite[Question 1.1]{khare2020liftable}).
	
	\begin{question}[Khare, Serre]\label{question}
		Let $F$ be a field. Does every continuous homomorphism $\Gamma_F\to \on{GL}_n(\F_p)$ lift to a continuous homomorphism $\Gamma_F\to \on{GL}_n(\Z/p^2\Z)$?
	\end{question} 
	In other words, does every $n$-dimensional continuous mod $p$ representation of $\Gamma_F$ lift modulo $p^2$? \Cref{question} is equivalent to the embedding problem for the extension
	\begin{equation}\label{intro-gln-ext}
		\begin{tikzcd}
			0 \arrow[r] & \mathfrak{gl}_n(\F_p) \arrow[r] & \on{GL}_n(\Z/p^2\Z) \arrow[r] & \on{GL}_n(\F_p) \arrow[r] & 1
		\end{tikzcd}
	\end{equation}
	over all fields.
	\Cref{question} has an affirmative answer for $n=1$ and any prime $p$, because the surjection $(\Z/p^2\Z)^\times\to \F_p^\times$ is split. When $n=2$, \Cref{question} also has a positive answer for every prime $p$, by a theorem of Khare and Serre \cite{khare1997base}. More precisely, Khare proved the theorem for number fields, and Serre observed that Khare's argument involved only Kummer theory, and hence generalized to an arbitrary field; see \cite[Remark 2 p. 392]{khare1997base}. To our knowledge, \Cref{question} originated from this result. De Clercq and Florence \cite{declercq2022lifting} answered \Cref{question} affirmatively when $3\leq n\leq 4$ and $p=2$.
	
	Lifting methods for $2$-dimensional representations are well developed over number fields, following the work of Ramakrishna \cite{ramakrishna} and Khare--Wintenberger \cite{khare2009serre}. B\"ockle \cite{bockle2003lifting} obtained a positive answer to \Cref{question} when $F$ is a local field, and also when $F$ is a global field and the image of $\Gamma_F\to \on{GL}_n(\F_p)$ is sufficiently large. When $F$ is a number field containing a primitive root of unity of order $p^2$, all $3$-dimensional mod $p$ representations of $\Gamma_F$ lift modulo $p^2$, by Khare--Larsen \cite{khare2020liftable}.
	
	Let $B_n\subset \on{GL}_n$ be the Borel subgroup of upper-triangular matrices. Consider the following conjecture of Florence (cf. \cite{florence2020smooth}).
	
	\begin{conj}[Florence]\label{florence-conj}
		Let $p$ be a prime number, let $F$ be a field containing a primitive $p^2$-th root of unity, and let $n\geq 1$ be an integer. Every continuous homomorphism $\Gamma_F\to B_n(\F_p)$ lifts to a continuous homomorphism $\Gamma_F\to B_n(\Z/p^2\Z)$.	
	\end{conj}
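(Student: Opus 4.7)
The plan is to exploit the Levi decomposition $B_n = T_n \ltimes U_n$, with $T_n$ the diagonal torus and $U_n$ the unipotent radical, and reduce the lifting problem to a sequence of abelian problems controlled by Kummer theory. Composing $\rho$ with $B_n(\F_p)\twoheadrightarrow T_n(\F_p)=(\F_p^\times)^n$ yields characters $\chi_1,\dots,\chi_n\colon \Gamma_F \to \F_p^\times$; each lifts to $\tilde\chi_i\colon \Gamma_F\to(\Z/p^2\Z)^\times$ via the Teichm\"uller section for $p$ odd, or using $\zeta_4\in F$ for $p=2$. Writing $\rho(g)=t(g)u(g)$ with $t(g)\in T_n(\F_p)$ and $u(g)\in U_n(\F_p)$, the remaining task is to lift the twisted $1$-cocycle $u\colon \Gamma_F\to U_n(\F_p)$ to a twisted $1$-cocycle $\tilde u\colon \Gamma_F\to U_n(\Z/p^2\Z)$, where the $\Gamma_F$-action on $U_n(\Z/p^2\Z)$ is given by the $\tilde\chi_i$.

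I would then lift $u$ inductively along the central filtration $U_n = U_n^{(1)} \supset U_n^{(2)} \supset \dots \supset U_n^{(n)} = 1$ by superdiagonal level, whose graded quotients $U_n^{(k)}/U_n^{(k+1)}$ are $T_n$-equivariantly $\bigoplus_{j-i=k}\G_a(\chi_i\chi_j^{-1})$. At the simple-root level $k=1$, lifting the $(i,i+1)$-component of $u$ is an abelian problem whose obstruction is a twisted Bockstein class in $H^2(\Gamma_F,\F_p(\chi_i\chi_{i+1}^{-1}))$. The crucial input is that $\zeta_{p^2}\in F$ makes the reduction $F^\times/F^{\times p^2}\to F^\times/F^{\times p}$ surjective --- equivalently, the untwisted Bockstein $H^1(\Gamma_F,\F_p)\to H^2(\Gamma_F,\F_p)$ vanishes --- and this vanishing propagates to all twisted Bocksteins by passing to the finite prime-to-$p$ Galois extension $F'/F$ trivializing a given character $\psi\colon \Gamma_F\to \F_p^\times$ and projecting to $\psi$-isotypic components of $H^\bullet(\Gamma_{F'},\F_p)$. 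Thus each simple-root component of $u$ admits an individual lift.

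The main obstacle is the secondary obstruction at higher levels $k\geq 2$. For a non-simple root $(i,j)$ with $j-i\geq 2$, the lifted simple-root components $\tilde u_{i,i+1},\dots,\tilde u_{j-1,j}$ determine, through the non-abelian $U_n$-cocycle identities, a class in $H^2(\Gamma_F,\Z/p^2\Z(\tilde\chi_i\tilde\chi_j^{-1}))$ that must vanish for the $(i,j)$-entry to lift. This class is of Massey-product type in the simple-root lifts, and is not controlled by the Bockstein vanishing alone. The strategy is to exploit the freedom $\tilde u_{i,i+1}\mapsto \tilde u_{i,i+1}+pv_{i,i+1}$ in each simple-root lift --- with $v_{i,i+1}\in Z^1(\Gamma_F,\F_p(\chi_i\chi_{i+1}^{-1}))$ --- and to show that the resulting bilinear system in the $v_{i,i+1}$'s is solvable, using that the mod-$p$ iterated cup product $u_{i,i+1}\cup u_{i+1,i+2}\cup\dots\cup u_{j-1,j}$ already vanishes in $H^2(\Gamma_F,\F_p(\chi_i\chi_j^{-1}))$ (this is forced by the existence of $\rho$). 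Establishing this solvability uniformly --- simultaneously across all non-simple roots and compatibly with the inductive structure on the filtration --- is the technical heart of the argument and the step I expect to be the hardest.
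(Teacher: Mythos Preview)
The statement you are attempting to prove is not a theorem of the paper; it is a conjecture that the paper \emph{refutes}. Theorem~\ref{thm-gl3} shows that for every odd prime $p$, every $n\geq 3$, and every field $F$ of characteristic $\neq p$ (in particular, every $F$ containing $\zeta_{p^2}$), the class of $0\to\mathfrak{b}_n(\F_p)\to B_n(\Z/p^2\Z)\to B_n(\F_p)\to 1$ in $H^2(B_n(\F_p),\mathfrak{b}_n(\F_p))$ is not negligible over $F$. Hence there exist $\rho\colon\Gamma_K\to B_n(\F_p)$ admitting no lift to $B_n(\Z/p^2\Z)$, and your argument must contain an error.

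The error is exactly where you flagged it: the assertion that ``the resulting bilinear system in the $v_{i,i+1}$'s is solvable'' is false already for $n=3$ and $p$ odd. Whether $\rho$ lifts is decided by the single class $\rho^*(\beta)\in H^2(K,\mathfrak{b}_n(\F_p))$, which is independent of every intermediate choice in your inductive scheme; the freedom $\tilde u_{i,i+1}\mapsto\tilde u_{i,i+1}+pv_{i,i+1}$ merely moves the secondary obstruction within a fixed coset and can never force $\rho^*(\beta)$ to vanish when it is nonzero. The paper pins this down concretely: restricting to $U=U_3(\F_p)$ with $A=\mathfrak{gl}_3(\F_p)$, Theorem~\ref{thm-negligible} identifies the negligible subgroup as $\smash{\overbar{H}}^2(U,A)$, and Claims~\ref{claim-1} and~\ref{claim-4} show that the extension class $\alpha$ restricts to a \emph{nonzero} element of $H^2(N,A)^T$ (with $N=\langle\sigma_{12},\sigma_{13}\rangle$) while every class in $\smash{\overbar{H}}^2(U,A)$ restricts to zero there. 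Your observation that the mod-$p$ iterated cup products $u_{i,i+1}\cup\cdots\cup u_{j-1,j}$ vanish because $\rho$ exists is correct but irrelevant: that vanishing is the Massey-type obstruction to the existence of $\rho$ itself, whereas the obstruction to lifting $\rho$ modulo $p^2$ is a genuinely different invariant, and the paper proves it does not vanish in general.
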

	In other words, every length $n$ complete flag of continuous representations of $\Gamma_F$ modulo $p$ should lift to a complete flag modulo $p^2$. \Cref{florence-conj} is equivalent to the embedding problem for the extension
	\begin{equation}\label{intro-bn-ext}
		\begin{tikzcd}
			0 \arrow[r] & \mathfrak{b}_n(\F_p) \arrow[r] & B_n(\Z/p^2\Z) \arrow[r] & B_n(\F_p) \arrow[r] & 1
		\end{tikzcd}
	\end{equation}
	over fields containing a primitive $p^2$-th root of unity. By a restriction-corestriction argument, \Cref{florence-conj} implies a positive answer to \Cref{question}. The original form of \Cref{florence-conj} did not assume the existence of a primitive $p^2$-th root of unity, but this version is false, as demonstrated by a recent counterexample of Florence \cite{florence2024triangular}. (As observed by Florence, this is irrelevant for potential applications of \Cref{florence-conj}; see below.) In Florence's example, the prime $p$ is odd, $n=3$, and $F=\Q(\!(t)\!)$.
	
	\Cref{florence-conj} is motivated by ongoing work of De Clercq and Florence on the Bloch--Kato conjecture \cite{declercq2017lifting, declercq2022lifting, declercqc2020smooth-1, florence2020smooth, declercq2020smooth-3}. Voevodsky and Rost proved the Bloch--Kato conjecture using motivic cohomology and triangulated categories of motives. As observed by De Clercq and Florence, \Cref{florence-conj} implies the surjectivity assertion of the Bloch--Kato conjecture for \emph{all} fields. It is well known that this in turn implies the full Bloch--Kato conjecture by elementary arguments; see \cite[Th\'eor\`eme 0.1]{gille2007symbole}. De Clercq and Florence aimed to prove \Cref{florence-conj} using only Hilbert's Theorem 90 and Kummer theory, in a precise sense \cite[Conjecture 14.25]{declercq2017lifting}, thus achieving an elementary proof of the Bloch--Kato conjecture. Conti--Demarche--Florence \cite{conti2024lifting} proved \Cref{florence-conj} for all local fields $F$, without assumptions on roots of unity.

	In this article, as an application of our main theorem, \emph{we negatively answer \Cref{question} and disprove \Cref{florence-conj}, even over fields containing all roots of unity}.
	
	\subsection{The main theorem} The main result of this article is the determination, in group cohomology terms, of all the extensions (\ref{group-extension}) for which the embedding problem has positive solution, in the case when $F$ contains a primitive root of unity of order $e(A)e(H)$. (Here and in what follows, $e(-)$ denotes the exponent of a finite group, that is, the least common multiple of the orders of its elements.) The existence of such a uniform description was surprising to us: extensions (\ref{group-extension}) are very general, and, as discussed above, until now the answer to the embedding problem with abelian kernel was not known even in basic examples such as (\ref{intro-gln-ext}) and (\ref{intro-bn-ext}).
	
	To state our result, let $\alpha\in H^2(H,A)$ be the class of (\ref{group-extension}), and let $F$ be a field. Following Serre, we say that $\alpha$ is \emph{negligible over $F$} if, for every field extension $K/F$ and every continuous homomorphism $\rho\colon \Gamma_K\to H$, we have $\rho^*(\alpha)=0$ in $H^2(\Gamma_K,A)$. It is easy to see that $\alpha$ is negligible over $F$ if and only if the embedding problem for (\ref{group-extension}) has a positive solution over $F$; see \Cref{connection}. Thus \Cref{question} may be rephrased as: Is the class of (\ref{intro-gln-ext}) in $H^2(\on{GL}_n(\F_p),\mathfrak{gl}_n(\F_p))$ negligible over $F$? Similarly, \Cref{florence-conj} predicts that the class of (\ref{intro-bn-ext}) in $H^2(B(\F_p),\mathfrak{b}_n(\F_p))$ is negligible over every field $F$ containing a primitive root of unity of order $p^2$.
	
	\begin{thm}\label{thm-negligible}
		Let $H$ be a finite group, let $A$ be a finite $H$-module, and let $F$ be a field
		containing a primitive root of unity of order $e(A) e(H)$. Then the subgroup of negligible classes in $H^2(H,A)$ is generated by the images of the maps
		\[A^{H'}\otimes H^2(H',\Z)\xrightarrow{\cup} H^2(H',A)\xrightarrow{\on{cor}} H^2(H, A),\]
		where $H'$ ranges over all subgroups of $H$.
	\end{thm}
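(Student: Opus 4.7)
The plan is to prove the two inclusions separately, using a versal torsor and the Hochschild--Serre spectral sequence for the hard direction.

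\emph{($\supseteq$: easy direction.)} For $a\in A^{H'}$ and $\beta\in H^2(H',\Z)$, I check that $\on{cor}^H_{H'}(a\cup\beta)$ is negligible. Given any $\rho\colon\Gamma_K\to H$, let $K' = K^{\rho^{-1}(H')}$; functoriality of corestriction gives $\rho^*\on{cor}^H_{H'}(a\cup\beta) = \on{cor}_{K'/K}(a\cup\rho|_{\Gamma_{K'}}^*\beta)$, so it suffices to show $a\cup\rho|_{\Gamma_{K'}}^*\beta = 0$ in $H^2(\Gamma_{K'},A)$. Since $a$ generates a cyclic subgroup of $A$ of order $m\mid e(A)$ on which $\Gamma_{K'}$ acts trivially, this cup product lies in the image of $H^2(\Gamma_{K'},\Z/m)\to H^2(\Gamma_{K'},A)$. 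Identifying $\beta\leftrightarrow\chi\in H^1(H',\Q/\Z)$ via the Bockstein isomorphism $H^1(H',\Q/\Z)\cong H^2(H',\Z)$ and using naturality applied to the evident map between $0\to\Z\to\Q\to\Q/\Z\to 0$ and $0\to\Z/m\to\Q/m\Z\to\Q/\Z\to 0$, the image in $H^2(\Gamma_{K'},\Z/m)$ equals the Bockstein of $\rho|_{\Gamma_{K'}}^*\chi$ for the second sequence. Writing $N'$ for the order of $\rho|_{\Gamma_{K'}}^*\chi$, which divides $e(H')$, this Bockstein measures the obstruction to lifting $\rho|_{\Gamma_{K'}}^*\chi$ through $\Z/N'm\twoheadrightarrow\Z/N'$. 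Since $N'm\mid e(A)e(H)$ and $\zeta_{e(A)e(H)}\in F\subseteq K'$, Kummer theory supplies the lift (via the surjection ${K'}^\times/{K'}^{\times N'm}\twoheadrightarrow{K'}^\times/{K'}^{\times N'}$), so the Bockstein, and hence the original class, vanishes.

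\emph{($\subseteq$: hard direction.)} The plan is to reduce to a versal situation and compute via Hochschild--Serre. Let $V$ be a faithful $F$-linear representation of $H$, set $L = F(V)$ and $E = L^H$; the $H$-Galois extension $L/E$ yields a surjection $\rho^{\mathrm{vers}}\colon\Gamma_E\twoheadrightarrow H$. By the versality principle recorded in \Cref{connection}, a class $\alpha\in H^2(H,A)$ is negligible over $F$ if and only if $(\rho^{\mathrm{vers}})^*\alpha=0$. The Hochschild--Serre spectral sequence for $1\to\Gamma_L\to\Gamma_E\to H\to 1$ with coefficients in $A$ has $\Gamma_L$ acting trivially on $A$ (since the $\Gamma_E$-action factors through $H$), and no differential beyond $d_2$ lands in $E_r^{2,0}$ for degree reasons. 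Hence the kernel of inflation $H^2(H,A)\to H^2(\Gamma_E,A)$ coincides with the image of the transgression $d_2\colon H^1(\Gamma_L,A)^H\to H^2(H,A)$. It therefore suffices to show $\on{im}(d_2)$ equals the claimed sum of corestricted cup products.

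\emph{Main obstacle.} Computing $d_2$ is where the work concentrates. Using Kummer theory (available because $\zeta_{e(A)e(H)}\in F$), I would describe $H^1(\Gamma_L,A) = \on{Hom}(\Gamma_L,A)$ in terms of units of $L$ and the intermediate fields $L^{H'}$ for $H'\leq H$, translating the $H$-action on $\Gamma_L$ to the semi-linear $H$-action on $L=F(V)$. The expectation is that every $H$-invariant homomorphism $\phi\colon\Gamma_L\to A$ can be decomposed as a sum of summands, each factoring through $\Gamma_{L^{H'}}^{\mathrm{ab}}$ and picking out an element $a\in A^{H'}$; the transgression of such a summand, computed from an explicit cochain lift of $\phi$ to $\Gamma_E$ and evaluation of the resulting $2$-cocycle on $H$, should match $\on{cor}^H_{H'}(a\cup\beta)$ for an appropriate character $\beta\in H^2(H',\Z)$. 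The hard part is performing this decomposition and cocycle computation uniformly across $H'\leq H$ so that the output is recognizable as a corestricted cup product; this is where the concrete model $L=F(V)$ and the hypothesis on roots of unity are used essentially.
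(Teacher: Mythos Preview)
Your overall architecture matches the paper's: both inclusions are proved separately, the easy direction reduces via functoriality of corestriction/pushforward/pullback to a Kummer-theory statement about cyclic groups (essentially \Cref{kummer-theory}), and the hard direction is handled via the versal extension $L=F(V)$ over $E=F(V)^H$, identifying $H^2(H,A)_{\on{neg},F}$ with the image of the transgression $\on{tg}\colon H^1(L,A)^H\to H^2(H,A)$ from the Hochschild--Serre five-term sequence. Your easy direction is correct and is the paper's argument.

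The gap is at exactly the point you flag as ``the hard part.'' Your proposed decomposition---writing an $H$-invariant $\phi\colon\Gamma_L\to A$ as a sum of pieces ``factoring through $\Gamma_{L^{H'}}^{\mathrm{ab}}$''---is not a workable mechanism as stated: for $H'\leq H$ the field $L^{H'}$ sits between $E$ and $L$, so $\Gamma_L\subset\Gamma_{L^{H'}}$, and it is unclear what such a factorization means or why it would produce corestricted cup products. The paper's substitute is \emph{geometric}. After the Kummer identification $H^1(L,A)^H\cong(A(-1)\otimes F(V)^\times)^H$, one uses the divisor exact sequence
\[1\to F^\times\to F(V)^\times\xrightarrow{\on{div}}\on{Div}(V)\to 0,\]
available because $V$ is affine space. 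The key point is that $\on{Div}(V)$ is a \emph{permutation} $H$-module, free on $V^{(1)}$; hence $(A(-1)\otimes\on{Div}(V))^H$ is generated by norms $N_{H/H_x}(a\otimes x)$ with $H_x$ the stabilizer of $x\in V^{(1)}$ and $a\in A(-1)^{H_x}$ (\Cref{gener}). The $F^\times$ contribution is killed by passing to $\overline{F}$, where $A(-1)\otimes\overline{F}^{\times}=0$ by divisibility. The remaining work (\Cref{diagram}, \Cref{galois-algebras}, \Cref{image-varphi}, plus \Cref{tor-trick}) is an explicit computation showing that $\on{tg}$ sends each such norm into $\on{Im}(\varphi_{H_x})$: one builds the Galois algebra $M=L[t]/(t^n-f)$ for $f$ a local equation of $x$, reads off that the transgression equals (up to sign) $a\cup(\partial_2\partial_1)(f\mu_e)$, and then uses a universal-coefficient argument to place $(\partial_2\partial_1)(f\mu_e)$ in the image of $H^2(H_x,\Z)\otimes\mu_n\xrightarrow{\cup}H^2(H_x,\mu_n)$. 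The idea missing from your proposal is this permutation-module decomposition via divisors on affine space; without it there is no evident way to realize the decomposition you sketch.
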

	In particular, if $F$ has enough roots of unity, the subgroup of negligible classes over $F$ is independent of $F$, and it admits a description in pure group cohomology terms. When $A$ acts trivially on $H$, the subgroup of negligible classes was determined by Gherman and the first author in \cite{gherman2022negligible}, without assumptions on primitive roots of unity. Their result implies that the assumption on roots of unity in \Cref{thm-negligible} is sharp. 
	
	\subsection{Non-liftable Galois representations} 
	In addition to its conceptual significance, the description of $H^2(H,A)_{\on{neg},F}$ given in \Cref{thm-negligible} is easy to use in concrete examples. We illustrate this with the following application.
	
	\begin{thm}\label{thm-galois-rep}
		Let $p$ be an odd prime, let $F$ be a field containing a primitive $p$-th root of unity, and let $K\coloneqq F(x_1,\dots,x_p)$, where the $x_i$ are algebraically independent variables over $F$. For every integer $n\geq 3$, there exists a continuous homomorphism $\Gamma_K\to \on{GL}_n(\F_p)$ which does not lift to a continuous homomorphism $\Gamma_K\to \on{GL}_n(\Z/p^2\Z)$.
	\end{thm}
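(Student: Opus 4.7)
By the connection between the embedding problem and negligibility (see \Cref{connection}), it suffices to produce a continuous $\rho\colon\Gamma_K\to\on{GL}_n(\F_p)$ whose obstruction class $\rho^{*}(\alpha)\in H^2(\Gamma_K,\mathfrak{gl}_n(\F_p))$ is nonzero, where $\alpha\in H^2(\on{GL}_n(\F_p),\mathfrak{gl}_n(\F_p))$ is the class of the extension~\eqref{intro-gln-ext}. My plan is to combine \Cref{thm-negligible} with an explicit Kummer-theoretic construction of $\rho$.

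First I would reduce to $n=3$. Under the block-diagonal embedding $\iota\colon\on{GL}_3\hookrightarrow\on{GL}_n$, $M\mapsto\on{diag}(M,I_{n-3})$, the module $\mathfrak{gl}_n(\F_p)$ decomposes as a $\on{GL}_3(\F_p)$-representation into $\mathfrak{gl}_3(\F_p)\oplus V\oplus V^{*}\oplus\mathfrak{gl}_{n-3}(\F_p)$, where $V=\F_p^3$ is the standard representation, and the $\mathfrak{gl}_3(\F_p)$-component of $\alpha_n|_{\on{GL}_3}$ equals $\alpha_3$. Hence a non-liftable $\rho_3\colon\Gamma_K\to\on{GL}_3(\F_p)$ produces a non-liftable $\rho_n=\iota\circ\rho_3$ in dimension $n$, and it is enough to handle $n=3$.

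For $n=3$, I would pick a subgroup $H\subset\on{GL}_3(\F_p)$ of exponent $p$---most naturally the Heisenberg group $H=U_3(\F_p)$---and enlarge the base field to $F'\coloneqq F(\mu_{p^2})$ so that \Cref{thm-negligible} applies to $(H,A=\mathfrak{gl}_3(\F_p))$ over $F'$. Since negligibility over $F$ implies negligibility over the larger $F'$, showing non-negligibility over $F'$ is sufficient: by \Cref{thm-negligible}, this amounts to verifying that $\alpha|_H\in H^2(H,A)$ does not lie in the subgroup generated by classes of the form $\on{cor}^{H}_{H'}(a\cup b)$ with $H'\leq H$, $a\in A^{H'}$, and $b\in H^2(H',\Z)$. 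Then I would construct $\rho\colon\Gamma_K\to H$ explicitly using the Kummer characters $\chi_{x_i}\in H^1(\Gamma_K,\F_p)$ of the variables: for the Heisenberg target, $\rho(\sigma)=I+\phi_1(\sigma)E_{12}+c(\sigma)E_{13}+\phi_2(\sigma)E_{23}$ with characters $\phi_1,\phi_2$ chosen so that $\phi_1\cup\phi_2=0$ (for example via Steinberg, taking $\phi_1=\chi_{x_1}$ and $\phi_2=\chi_{1-x_1}$), together with a compatible cochain witness $c$.

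The main obstacle is to verify that $\rho^{*}(\alpha)\ne 0$ in $H^2(\Gamma_K,\mathfrak{gl}_3(\F_p))$. The naive ``Bockstein-at-superdiagonal'' contributions $\beta(\phi_i)\cdot E_{i,i+1}$ vanish cohomologically: each $E_{ij}$ with $i\ne j$ is equal to $(\rho(\sigma)-1)\cdot E_{ii}$ for a suitable $\sigma\in\Gamma_K$, so a $1$-cochain of the form $a(\sigma)E_{ii}$ has coboundary killing $\chi\cdot E_{ij}$ for any $\chi\in H^1(\Gamma_K,\F_p)$. Consequently the essential nontrivial piece of the obstruction must sit in the top-corner $(1,3)$-entry of the matrix and take the shape of a triple Massey product of the $\phi_i$'s. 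Its nonvanishing in $H^2(\Gamma_K,\F_p)$ is where the cohomological richness of $K=F(x_1,\dots,x_p)$ (provided by the $p$ algebraically independent parameters) becomes indispensable, and matches the non-negligibility conclusion supplied by \Cref{thm-negligible}.
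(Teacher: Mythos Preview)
Your proposal has a genuine gap at the crucial step: you never establish that your explicit Heisenberg representation $\rho$ over $K=F(x_1,\dots,x_p)$ satisfies $\rho^*(\alpha)\neq 0$.

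The reduction to $n=3$ and the plan to show that $\alpha|_{U_3(\F_p)}$ is not negligible via \Cref{thm-negligible} are both sound and match the paper's strategy; that non-negligibility verification is in fact the main computational content of the proof of \Cref{thm-gl3} and is far from automatic. But once non-negligibility is known you still need a representation over the \emph{specific} field $K=F(x_1,\dots,x_p)$ witnessing it, and here your approach breaks down in two ways. First, the explicit $\rho$ you propose is built from $\chi_{x_1}$ and $\chi_{1-x_1}$ only, so it factors through $\Gamma_{F(x_1)}$; you invoke ``the $p$ algebraically independent parameters'' in your final sentence but never actually use more than one of them. Second, and more seriously, you guess that the essential piece of the obstruction is a triple Massey product of the $\phi_i$ and aim to show it is nonzero. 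But triple Massey products in mod~$p$ Galois cohomology are known to \emph{vanish} over all fields whenever they are defined (Min\'a\v{c}--T\^an, Efrat--Matzri; see the references in the introduction), so if the obstruction really were such a product your argument would prove the opposite of what you want.

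The paper sidesteps any explicit construction. Having shown that $\alpha|_{U}$ is not negligible (where $U=U_3(\F_p)$), it takes for $\rho$ the \emph{generic} homomorphism $\Gamma_{F(V)^U}\to U$ associated with a faithful $p$-dimensional $U$-representation $V$; by \Cref{connection} this $\rho$ automatically has nonzero obstruction. The identification of the base field with $F(x_1,\dots,x_p)$ then comes from the Chu--Kang rationality theorem, which says that $F(V)^U$ is purely transcendental of degree $p$ over $F$ when $F$ contains a primitive $p$-th root of unity. This rationality result is the ingredient missing from your plan.
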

	In particular, (\ref{intro-gln-ext}) and (\ref{intro-bn-ext}) are not negligible over \emph{every} field $F$ of characteristic different from $p$. (Conversely, if $\on{char}(F)=p$, then $H^2(F,M)=0$ for every finite $p$-torsion $\Gamma_F$-module $M$, and hence (\ref{intro-gln-ext}) and (\ref{intro-bn-ext}) are negligible over $F$.)
	
	\Cref{thm-galois-rep} shows that \Cref{question} has a negative answer, and hence that \Cref{florence-conj} is false, even over fields containing all roots of unity. 
	
	\subsection{Contents}
	We conclude this introduction with a description of the contents of each section. In \Cref{section-2}, we define and review the basic properties of degree $2$ negligible cohomology classes. In \Cref{section-3}, we describe the subgroup of degree $2$ negligible classes as the image of a certain map $\gamma$, closely related to the transgression map in group cohomology, and we analyze the map $\gamma$. In \Cref{section-4}, we use the work of the previous section to prove \Cref{thm-negligible}. Finally, in \Cref{section-5} we prove that (\ref{intro-gln-ext}) and (\ref{intro-bn-ext}) are not negligible (\Cref{thm-gl3}): this already gives a negative answer to \Cref{question} and disproves \Cref{florence-conj}. A slight refinement of the argument, combined with the positive solution to the Noether problem for the group of rank $3$ upper unitriangular matrices $U_3(\F_p)$, due to Chu and Kang \cite{chu2001rationality}, then proves \Cref{thm-galois-rep}.

	\subsection*{Notation}
	
	For a profinite group $\Gamma$ and a discrete $\Gamma$-module $A$, we write $H^i(\Gamma,A)$ for the $i$-th cohomology group of $\Gamma$ with coefficients on $A$. For a field $F$, we let $F^{\on{sep}}$ be a separable closure of $F$, we let $\Gamma_F\coloneqq \on{Gal}(F^{\on{sep}}/F)$ be the absolute Galois group of $F$, and for every discrete $\Gamma_F$-module $A$ we set $H^i(F,A)\coloneqq H^i(\Gamma_F,A)$. If $G$ is a finite group, we let $e(G)$ be the exponent of $G$, that is, the least common multiple of the orders of the elements of $G$.

	\section{Negligible cohomology}\label{section-2}
	
	Let $H$ be a finite group, let $A$ be an $H$-module, let $F$ be a field, and let $d$ be a non-negative integer. An element $\alpha\in H^d(H,A)$ is called
	\emph{negligible over $F$} if for every field extension $K/F$ and every continuous group homomorphism $\Gamma_K\to H$,
	the induced homomorphism $H^d(H,A)\to H^d(K,A)$ takes $\alpha$ to zero. (Here $A$ is viewed as a discrete $\Gamma_K$-module via the homomorphism $\Gamma_K\to H$.) This definition is due to Serre; see \cite[\S 26 p. 61]{garibaldi2003cohomological}. The negligible elements over $F$ form a subgroup
	\[
	H^d(H,A)_{\on{neg}, F} \subset H^d(H,A).
	\]

	Suppose now that $d=2$, and let $\alpha\in H^2(H,A)$. Then $\alpha$ represents a group extension
	\begin{equation}\label{extension-eq}
		\begin{tikzcd}
			0 \arrow[r]  & A \arrow[r] & G \arrow[r] & H \arrow[r] & 1,
		\end{tikzcd}
	\end{equation}
	where the $H$-action on $A$ induced by the conjugation $G$-action coincides with the $H$-module action. Conversely, every extension (\ref{extension-eq}) is represented by an element in $H^2(H,A)$.

	Let $V$ be a (finite-dimensional) faithful representation of $H$ over $F$. Then $F(V)/F(V)^H$ is a Galois field extension with Galois group $H$.
	It is called a \emph{generic} Galois field extension with Galois group $H$. The word ``generic" is explained by the next proposition.	
	
	\begin{prop}\label{connection}
		Let $H$ be a finite group, let $A$ be an $H$-module, and consider the class $\alpha\in H^2(H,A)$ of an extension (\ref{extension-eq}). Let $V$ be a faithful $H$-representation over $F$, fix an embedding of the Galois extension $F(V)/F(V)^H$ into a separable closure of $F(V)^H$, and let $\rho\colon \Gamma_{F(V)^H}\to H$ be the corresponding continuous homomorphism. The following are equivalent.
		\begin{enumerate}
			\item The class $\alpha$ is negligible over $F$.
			\item For every field extension $K/F$, every continuous homomorphism $\Gamma_K\to H$ lifts to $G$.
			\item The continuous homomorphism $\rho$ lifts to $G$.
		\end{enumerate}
		Furthermore,
		\[H^2(H,A)_{\on{neg}, F}=\on{Ker}[H^2(H,A)\xrightarrow{\on{inf}} H^2(F(V)^H,A)].\]
	\end{prop}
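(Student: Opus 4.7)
The plan rests on one standard cohomological fact which I would record first: for any continuous homomorphism $\sigma\colon\Gamma_K\to H$, pulling back the extension (\ref{extension-eq}) along $\sigma$ produces a topological extension of $\Gamma_K$ by $A$ whose class in $H^2(\Gamma_K,A)$ equals $\sigma^*(\alpha)$, and this class vanishes if and only if the pulled-back extension admits a continuous section, equivalently $\sigma$ lifts to a continuous $\widetilde\sigma\colon\Gamma_K\to G$. This is immediate from the cocycle description of $\alpha$. Granted this, $(1)\Leftrightarrow (2)$ is the very definition of negligibility, and $(2)\Rightarrow (3)$ is trivial by specializing to $K=F(V)^H$ with $\rho$ the given generic homomorphism. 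The displayed final formula is then a rephrasing: $\on{inf}(\alpha)\in H^2(F(V)^H,A)$ is by construction $\rho^*(\alpha)$ for the generic $\rho$, so condition (3) is precisely the vanishing of $\on{inf}(\alpha)$, and combined with $(1)\Leftrightarrow (3)$ this identifies $H^2(H,A)_{\on{neg},F}$ with $\ker(\on{inf})$.

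The only nontrivial direction is $(3)\Rightarrow (2)$, and for this I would exploit the versality of the generic $H$-torsor $V^{\on{gen}}\to V^{\on{gen}}/H$, where $V^{\on{gen}}\subset V$ is the $H$-stable open subscheme on which $H$ acts freely (nonempty because $V$ is faithful). Given any $K/F$ and continuous $\rho'\colon\Gamma_K\to H$ with associated Galois $H$-algebra $L/K$, I would apply Galois descent to the $L$-scheme $V^{\on{gen}}\times_F L$ with its diagonal $H$-action (simultaneously through $V^{\on{gen}}$ and through $\on{Gal}(L/K)=H$). The quotient descends to a $K$-scheme $X$ that is Zariski open in a $K$-form of $\A^{\dim V}$, so $X(K)\neq\emptyset$ whenever $K$ is infinite; the case of $K$ finite requires no argument because $H^2(K,A)=0$ for finite $K$ and finite $A$. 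A $K$-point of $X$ is the same datum as an $H$-equivariant $L$-morphism $\Spec L\to V^{\on{gen}}$, which on passage to $H$-quotients gives a $K$-morphism $\phi\colon\Spec K\to V^{\on{gen}}/H$ whose pullback of the generic $H$-torsor is precisely $\Spec L\to\Spec K$.

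With $\phi$ in hand, I would conclude $\rho'^*(\alpha)=0$ by specializing from the generic point $\Spec F(V)^H$ of $V^{\on{gen}}/H$ to the image of $\phi$. Conceptually, the class $\alpha$ determines an \'etale cohomology class on the classifying stack $BH$; pulling back along the classifying morphism $V^{\on{gen}}/H\to BH$ produces a class on $V^{\on{gen}}/H$ whose restriction to the generic point is $\on{inf}(\alpha)$ and whose restriction along $\phi$ is $\rho'^*(\alpha)$, so the vanishing of the former forces the vanishing of the latter. The main obstacle to executing the plan will be making this specialization rigorous: no direct morphism $\Gamma_K\to\Gamma_{F(V)^H}$ exists, so one must argue through \'etale cohomology on $V^{\on{gen}}/H$, or alternatively by passing to a Henselian local ring at the image of $\phi$ (possibly after replacing $V$ by $V\oplus V$, still faithful, to guarantee the image of $\phi$ avoids any unwanted locus) and using that liftability of $H$-torsors to $G$-torsors is preserved by such specialization. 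Everything else in the argument is formal bookkeeping around these two ingredients.
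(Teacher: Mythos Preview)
The paper does not give a self-contained proof: it simply refers to \cite[Proposition 2.1, Corollary 2.2]{gherman2022negligible}. Your outline is the standard versal-torsor argument that underlies that reference, and it is correct. The equivalences $(1)\Leftrightarrow(2)$ and $(2)\Rightarrow(3)$ are indeed formal once one knows that $\sigma^*(\alpha)=0$ iff $\sigma$ lifts; the identification of $H^2(H,A)_{\on{neg},F}$ with $\ker(\on{inf})$ is then a tautology. For $(3)\Rightarrow(2)$ your versality argument is exactly right: the twist ${}^T V$ of $V$ by the given $H$-torsor is a $K$-form of affine space, hence isomorphic to $\A^{\dim V}_K$ by Hilbert~90 for $\on{GL}(V)$, so for $K$ infinite it has a $K$-point in the open locus, yielding the specialization morphism $\phi$.

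Two small remarks on the points you flagged. First, for the specialization step you do not need to pass to Henselizations or stacks: the hypothesis $(3)$ gives a continuous lift $\tilde\rho\colon\Gamma_{F(V)^H}\to G$, whose image is finite, so it corresponds to a finite \'etale $G_0$-cover of some open $U\subset V^{\on{gen}}/H$ dominating the generic $H$-torsor; after replacing $V$ by $V\oplus V$ (or $V\oplus F[H]$) you can choose $\phi$ to land in $U$ and pull back. Second, your treatment of finite $K$ invokes $H^2(K,A)=0$, which needs $A$ torsion; the statement does not assume this. A cleaner fix: for $K$ finite, base-change to $K(t)$ (infinite), apply the versality argument there, and use that $H^2(K,A)\to H^2(K(t),A)$ is injective for every discrete $\Gamma_K$-module $A$ (reduce to $A$ finitely generated, then to $A=\Z$ via the sequence $0\to\Z\to\Q\to\Q/\Z\to 0$, where it becomes injectivity of $\on{Hom}(\Gamma_K,\Q/\Z)\to\on{Hom}(\Gamma_{K(t)},\Q/\Z)$). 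With these adjustments your plan goes through without further obstacles.
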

	
	\begin{proof}
		See \cite[Proposition 2.1, Corollary 2.2]{gherman2022negligible}.
	\end{proof}
	
	\begin{example}[Kummer theory]\label{kummer-theory}
		Let $m,n\geq 1$ be integers, and view $\Z/n\Z$ as a $\Z/m\Z$-module with trivial action. Then every class in $H^2(\Z/m\Z,\Z/n\Z)$ is negligible over every field $F$ containing a primitive root of unity of order $mn$. Indeed, the group $H^2(\Z/m\Z,\Z/n\Z)$ is generated by the class $\alpha$ of the extension
		\[
		\begin{tikzcd}
			0 \arrow[r]  & \Z/n\Z \arrow[r] & \Z/mn\Z \arrow[r,"\pi"] & \Z/m\Z \arrow[r] & 0,
		\end{tikzcd}
		\]
		where $\pi$ is the reduction map. It thus suffices to show that $\alpha$ is negligible over $F$. For this, we observe that, because $F$ contains a primitive root of unity of order $mn$, by Kummer theory the map $\pi_*\colon H^1(K,\Z/mn\Z)\to H^1(K,\Z/m\Z)$ is (non-canonically) identified with the quotient map $F^\times/F^{\times mn}\to F^\times/F^{\times m}$, and hence in particular it is surjective. In other words, every continuous homomorphism $\Gamma_F\to \Z/m\Z$ lifts to $\Z/mn\Z$. By \Cref{connection}, this is equivalent to the assertion that $\alpha$ is negligible over $F$.
	\end{example}
	
	\begin{lemma}\label{props}
		Let $F$ be a field. 
		\begin{enumerate}
			\item For every finite group $H$ and every $H$-module homomorphism $B\to A$, the induced map
			$H^2(H,B)\to H^2(H,A)$ takes the subgroup $H^2(H,B)_{\on{neg},F}$ into $H^2(H,A)_{\on{neg},F}$.
			\item For every homomorphism of finite groups $H'\to H$ and every $H$-module $A$, the pullback map
			$H^2(H,A) \to H^2(H',A)$ takes the subgroup $H^2(H,A)_{\on{neg},F}$ into $H^2(H',A)_{\on{neg},F}$.
			\item For every finite group $H$, every subgroup $H'$ of $H$, and every $H$-module $A$, the corestriction $H^2(H',A) \to H^2(H,A)$ takes the subgroup $H^2(H',A)_{\on{neg},F}$ into $H^2(H,A)_{\on{neg},F}$.
			\item For every finite group $H$, every subgroup $H'$ of $H$, every finite $H$-module $A$, if $[H:H']$ is prime to $e(A)$, then a class $\alpha \in H^2(H,A)$ is negligible if and only if its restriction to $H^2(H',A)$ is negligible.
			\item For every finite group $H$, every $H$-module $A$, and every field extension $L/F$, we have $H^2(H,A)_{\on{neg},F}\subset H^2(H,A)_{\on{neg},L}$.
		\end{enumerate}
	\end{lemma}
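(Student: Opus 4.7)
Each part of the lemma is a compatibility of the negligible subgroup with a natural cohomological operation, and in most cases the proof is formal. Parts (1), (2), and (5) follow directly from the definition of negligibility together with the naturality of change of coefficients, pullback along group maps, and inclusion of field extensions, respectively. For (1), given a negligible $\alpha\in H^2(H,B)$, an $H$-module map $f\colon B\to A$, a field extension $K/F$, and a continuous $\rho\colon\Gamma_K\to H$, naturality gives $\rho^*(f_*\alpha)=f_*(\rho^*\alpha)=0$. For (2), any continuous $\sigma\colon\Gamma_K\to H'$ with $K/F$ composes via $H'\to H$ to give $\rho\colon\Gamma_K\to H$, and $\sigma^*$ applied to the pullback of $\alpha$ equals $\rho^*\alpha=0$. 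For (5), every field extension of $L$ is a field extension of $F$, so negligibility over $F$ immediately implies negligibility over $L$.

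The substantive step is part (3). Let $\alpha\in H^2(H',A)$ be negligible over $F$. By \Cref{connection}, to prove that $\on{cor}_{H'}^H\alpha$ is negligible it suffices to show that $\rho^*(\on{cor}_{H'}^H\alpha)=0$, where $\rho\colon\Gamma_{F(V)^H}\to H$ is the generic homomorphism attached to any faithful $H$-representation $V$ over $F$. The point is that $V$ is also a faithful $H'$-representation, so that $F(V)^{H'}$ equals the fixed field of $\rho^{-1}(H')\subset\Gamma_{F(V)^H}$, and because $\rho$ is surjective the double coset formula for $\on{res}_{\rho^{-1}(H')}^{\Gamma_{F(V)^H}}\circ\on{cor}_{H'}^H$ collapses to
\[
\rho^*(\on{cor}_{H'}^H\alpha)=\on{cor}_{F(V)^{H'}/F(V)^H}(\rho'^*\alpha),
\]
where $\rho'\colon\Gamma_{F(V)^{H'}}\to H'$ is the generic homomorphism at $H'$. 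Since $F(V)^{H'}\supset F$, negligibility of $\alpha$ yields $\rho'^*\alpha=0$, and the right-hand side vanishes.

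For part (4), let $m=[H:H']$ and assume $\gcd(m,e(A))=1$. The classical identity $\on{cor}_{H'}^H\circ\on{res}_{H'}^H=m$ holds on $H^2(H,A)$, and since $A$ is annihilated by $e(A)$, so is $H^2(H,A)$; the coprimality hypothesis then furnishes $u,v\in\Z$ with $um+v\,e(A)=1$. If the restriction of $\alpha$ is negligible, then by (3) the class $m\alpha=\on{cor}(\on{res}\alpha)$ is negligible, and $\alpha=u\cdot(m\alpha)$ is an integer multiple of a negligible class, hence negligible; the converse direction is a special case of (2). The only step that requires genuine care is the Mackey-type collapse used in (3), where surjectivity of the generic homomorphism $\rho$ is essential; everything else is formal manipulation.
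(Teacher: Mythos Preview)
Your proof is correct and matches the paper's approach: for (1), (2), (5) the paper merely cites \cite[Proposition 2.3]{gherman2022negligible} whereas you spell out the same direct arguments, and for (3) your displayed identity $\rho^*(\on{cor}_{H'}^H\alpha)=\on{cor}_{F(V)^{H'}/F(V)^H}(\rho'^*\alpha)$ is precisely the $\on{cor}$/$\on{inf}$ commutative square of \cite[Proposition 1.5.5]{neukirch2008cohomology} that the paper invokes together with \Cref{connection}. Your label ``double coset formula'' is a slight misnomer---what you are using is simply the compatibility of corestriction with inflation along the surjective $\rho$, not a Mackey decomposition---but the formula and its use are correct, and (4) is handled identically in both via $\on{cor}\circ\on{res}=[H:H']$.
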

	
	\begin{proof}
		(1), (2) and (5) are proved in \cite[Proposition 2.3]{gherman2022negligible}.
		For (3), consider a faithful $H$-representation $V$ over $F$.
		The square
		\[
		\begin{tikzcd}
			H^2(H',A) \arrow[d,"\on{cor}"] \arrow[r,"\on{inf}"] &  H^2(F(V)^{H'},A)  \arrow[d,"\on{cor}"] \\
			H^2(H,A) \arrow[r,"\on{inf}"]  &  H^2(F(V)^{H},A)
		\end{tikzcd}
		\]
		is commutative by \cite[Proposition 1.5.5]{neukirch2008cohomology}. Now \Cref{connection} implies (3). Finally, (4) follows from (2), (3) and a restriction-corestriction argument.
	\end{proof}
	
	For a finite Galois extension $L/K$ with Galois group $H$, the Lyndon--Hochschild--Serre spectral sequence
	\[
	E_2^{ij}\coloneqq H^i(H, H^j(L,A))\Longrightarrow H^{i+j}(K,A)
	\]
	gives a transgression map
	\begin{equation}\label{tg}\on{tg}\colon H^1(L,A)^H\to H^2(H,A)\end{equation}
	fitting into a short exact sequence
	\begin{equation}\label{tg-inf-2}H^1(L,A)^H\xrightarrow{\on{tg}} H^2(H,A)\xrightarrow{\on{inf}}H^2(L,A).
	\end{equation}
	
	\begin{cor}\label{connection-tg}
		Under the assumptions of \Cref{connection}, we have
		\[H^2(H,A)_{\on{neg}, F}=\on{Im}[H^1(F(V),A)^H\xrightarrow{\on{tg}} H^2(H,A)].\]
	\end{cor}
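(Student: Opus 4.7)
The plan is to combine the characterization of negligible classes given by \Cref{connection} with the five-term exact sequence of the Lyndon--Hochschild--Serre spectral sequence. By \Cref{connection},
\[
H^2(H,A)_{\on{neg},F}=\on{Ker}\!\bigl[H^2(H,A)\xrightarrow{\on{inf}} H^2(F(V)^H,A)\bigr],
\]
so the task reduces to identifying this kernel with the image of the transgression map.

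First I would specialize the Lyndon--Hochschild--Serre spectral sequence to the generic Galois extension $F(V)/F(V)^H$, whose Galois group is $H$. Writing $L=F(V)$ and $K=F(V)^H$, the five-term exact sequence reads
\[
0\to H^1(H,A)\xrightarrow{\on{inf}} H^1(K,A)\xrightarrow{\on{res}} H^1(L,A)^H\xrightarrow{\on{tg}} H^2(H,A)\xrightarrow{\on{inf}} H^2(K,A),
\]
and it yields exactness at $H^2(H,A)$: the kernel of inflation into $H^2(F(V)^H,A)$ coincides with the image of the transgression $\on{tg}\colon H^1(F(V),A)^H\to H^2(H,A)$. (This is the statement already recorded in \eqref{tg-inf-2}, applied to the extension $F(V)/F(V)^H$; note that the target of inflation relevant for the negligibility criterion is $H^2(K,A)=H^2(F(V)^H,A)$.)

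Combining the two displays gives
\[
H^2(H,A)_{\on{neg},F}=\on{Ker}\!\bigl[H^2(H,A)\xrightarrow{\on{inf}} H^2(F(V)^H,A)\bigr]=\on{Im}\!\bigl[H^1(F(V),A)^H\xrightarrow{\on{tg}} H^2(H,A)\bigr],
\]
which is exactly the asserted equality. Since both ingredients, namely \Cref{connection} and the five-term exact sequence, are standard and already available, there is no serious obstacle in the argument; the only point that requires a brief check is that the source and target of the transgression in \eqref{tg-inf-2} are correctly identified with $H^1(F(V),A)^H$ and $H^2(H,A)$ when one takes $L=F(V)$, $K=F(V)^H$, which follows from the fact that $\Gamma_{F(V)}$ acts trivially on $A$ (so $A^{\Gamma_L}=A$).
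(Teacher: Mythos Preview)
Your proof is correct and follows exactly the same approach as the paper: apply \Cref{connection} to write the negligible classes as the kernel of inflation, then invoke the exact sequence \eqref{tg-inf-2} (the five-term sequence for the extension $F(V)/F(V)^H$) to identify this kernel with the image of transgression. The only difference is that the paper compresses this to a single sentence, while you spell out the five-term sequence explicitly; the closing remark about $\Gamma_{F(V)}$ acting trivially on $A$ is harmless but unnecessary, since this is automatic from $\Gamma_{F(V)}$ being the kernel of $\rho$.
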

	
	\begin{proof}
		Combine \Cref{connection} and (\ref{tg-inf-2}).
	\end{proof}
	
	\begin{lemma}\label{cor-tg}
		Let $H'\subset H$ be a subgroup. We have a commutative square
		\[
		\begin{tikzcd}
			H^1(L,A)^{H'} \arrow[r,"\on{tg}"] \arrow[d,"N_{H/H'}"] & H^2(H',A) \arrow[d,"\on{cor}^{H'}_H"]  \\
			H^1(L,A)^H \arrow[r,"\on{tg}"] & H^2(H,A).
		\end{tikzcd}
		\]
	\end{lemma}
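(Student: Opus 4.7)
The plan is to derive the lemma from the naturality of the Hochschild--Serre spectral sequence under corestriction. Set $K' \coloneqq L^{H'}$; then $L/K'$ is a finite Galois extension with Galois group $H'$. The map $\on{tg} \colon H^1(L,A)^H \to H^2(H,A)$ is the differential $d_2 \colon E_2^{0,1} \to E_2^{2,0}$ in the Hochschild--Serre spectral sequence attached to $1 \to \Gamma_L \to \Gamma_K \to H \to 1$, and analogously $\on{tg} \colon H^1(L,A)^{H'} \to H^2(H',A)$ is the $d_2$ in the spectral sequence for $1 \to \Gamma_L \to \Gamma_{K'} \to H' \to 1$.

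Both spectral sequences arise as Grothendieck spectral sequences on the category of $\Gamma_K$-modules: the first from the composition $\on{Hom}_H(\Z, -) \circ (-)^{\Gamma_L}$, and the second, via Shapiro's lemma, from $\on{Hom}_H(\Z[H/H'], -) \circ (-)^{\Gamma_L}$. The $H$-equivariant map $\Z \to \Z[H/H']$ sending $1 \mapsto \sum_i [g_i H']$, for any choice of coset representatives $\{g_i\}$ of $H/H'$, induces a natural transformation $\on{Hom}_H(\Z[H/H'], -) \to \on{Hom}_H(\Z, -)$, and hence a morphism of Grothendieck spectral sequences from the $H'$-one to the $H$-one. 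On the $E_2$-page at position $(i,j)$ this morphism is the corestriction $\on{cor}_{H'}^H \colon H^i(H', H^j(L,A)) \to H^i(H, H^j(L,A))$. At $(0,1)$ it is the norm $N_{H/H'}$, since corestriction in degree zero is the summation map $m \mapsto \sum_i g_i m$, and at $(2,0)$ it is the corestriction $\on{cor}^{H'}_H$ appearing in the lemma. The desired commutativity then follows at once, because any morphism of spectral sequences commutes with the differentials $d_r$ on each page.

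The main point requiring care is the claim that corestriction lifts to a morphism of spectral sequences in the above manner. This is a standard compatibility in homological algebra; it can be verified either formally via the functoriality of Grothendieck spectral sequences with respect to natural transformations between the composed functors, or directly on cochains, using the explicit formulas for the transgression and for the corestriction on the bar resolution of $\Z$ over $\Z[H]$ and $\Z[H']$.
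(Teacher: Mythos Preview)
Your proof is correct and takes a genuinely different route from the paper's. The paper does not argue via morphisms of spectral sequences; instead, it invokes the explicit description of the transgression from \cite[Theorem~2.4.4]{neukirch2008cohomology}: if $u\in H^2(H,\Delta)$ is the class of the pushout of $1\to\Gamma_L\to\Gamma_K\to H\to 1$ along the abelianization $\Gamma_L\to\Delta$, then $\on{tg}(x)=-u\cup x$ for $x\in H^1(L,A)^H$, and the transgression for $H'$ is likewise cup product with $-u'$ where $u'=\on{res}^H_{H'}(u)$. The projection formula $\on{cor}^{H'}_H(\on{res}^H_{H'}(u)\cup x)=u\cup N_{H/H'}(x)$ then gives the commutativity in one line. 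Your approach is more conceptual and generalizes immediately (to higher edge maps, or to other Grothendieck spectral sequences), at the cost of relying on the somewhat black-box fact that corestriction lifts to a morphism of spectral sequences; the paper's approach is more hands-on and uses only the projection formula together with the cup-product formula for transgression, which is convenient here since the same formula is reused later in the proof of \Cref{diagram}.
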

	
	\begin{proof}
		Consider the short exact sequence $1\to \Gamma_L\to \Gamma_K\to H\to 1$, let $\Gamma_L\to \Delta$ be the quotient by the closure of the derived subgroup of $\Gamma_L$, and let $u\in H^2(H,\Delta)$ be the class of the pushout extension
		\begin{equation}\label{cor-tg-1}
			\begin{tikzcd}
				1 \arrow[r] & \Delta \arrow[r] & \Gamma \arrow[r] & H \arrow[r] & 1.        
			\end{tikzcd}
		\end{equation}
		By \cite[Theorem 2.4.4]{neukirch2008cohomology}, the map $\on{tg}\colon H^1(L,A)^H\to H^2(H,A)$ is given by $x\mapsto -u\cup x$.
		Let $u'\coloneqq \on{res}^H_{H'}(u)\in H^2(H',\Delta)$. Then $u'$ represents the pullback
		\[
		\begin{tikzcd}
			1 \arrow[r] & \Delta \arrow[r] & \Gamma' \arrow[r] & H' \arrow[r] & 1    
		\end{tikzcd}
		\]
		of (\ref{cor-tg-1}) to $H'$. Thus, invoking \cite[Theorem 2.4.4]{neukirch2008cohomology} again, the transgression map $\on{tg}\colon H^1(L,A)^{H'}\to H^2(H',A)$ is given by $x\mapsto -u'\cup x$. Combining this with the projection formula \cite[Proposition 1.5.3(iv)]{neukirch2008cohomology}, for all $x\in H^1(L,A)^{H'}$ we obtain
		\[\on{cor}^{H'}_H(\on{tg}(x))=-\on{cor}^{H'}_H(u\cup x)=-u'\cup N_{H/H'}(x)=\on{tg}(N_{H/H'}(x))\]
		in $H^2(H,A)$, as desired.
	\end{proof}
	
	\section{The map \texorpdfstring{$\gamma$}{gamma}}\label{section-3}
	Let $e,n\geq 1$ be integers, let $H$ be a finite group of exponent dividing $e$, let $A$ be a finite $H$-module such that $nA=0$, and let $L/K$ be a Galois field extension with Galois group $H$. We assume that $K$ contains a primitive root of unity of order $ne$; in particular, $ne$ is invertible in $K$. We regard $\mu_{ne}$ as an $H$-submodule of $K^\times$; note that $H$ acts trivially on $\mu_{ne}$. We define the $H$-module $A(-1)\coloneqq \on{Hom}(\mu_n,A)$, and we identify $A(-1)\otimes \mu_n$ with $A$ via the isomorphism of $H$-modules $\varphi\otimes \zeta\mapsto \varphi(\zeta)$.
	
	We have an isomorphism of $H$-modules \[
	\iota\colon A(-1)\otimes L^\times \xrightarrow{\sim} A(-1)\otimes H^1(L,\mu_n)\xrightarrow{\sim} H^1(L,A).\]
	Here, the first map is obtained by tensorization of the Kummer isomorphism $L^\times/L^{\times n}\xrightarrow{\sim} H^1(L,\mu_n)$ with $A(-1)$. The second map is the cup product map $H^0(L,A(-1))\otimes H^1(L,\mu_n)\to H^1(L,A)$: to see that it is an isomorphism, by additivity in $A$ one is immediately reduced to the case when $A$ is cyclic, in which case the result follows from the computation of the cohomology of finite cyclic groups \cite[Proposition 1.7.1]{neukirch2008cohomology}. 
	
	We consider the composite map
	\begin{equation}\label{gamma}
		\gamma\colon (A(-1)\otimes L^\times)^H \xrightarrow{\iota} H^1(L,A)^H\xrightarrow{\on{tg}} H^2(H,A),
	\end{equation}
	where $\on{tg}$ is the transgression map of (\ref{tg}). 
	
	We let $\partial_1\colon (L^\times/\mu_{e})^H\to H^1(H,\mu_{e})$ be the connecting homomorphism associated to the short exact sequence of $H$-modules
	\begin{equation}\label{delta-1}
		\begin{tikzcd}
			1 \arrow[r] & \mu_e \arrow[r] & L^\times \arrow[r] & L^\times/\mu_e \arrow[r] & 1. 
		\end{tikzcd}
	\end{equation}
	
	We also let $\partial_2\colon H^1(H,\mu_{e})\to H^2(H,\mu_n)$ be the connecting homomorphism associated to the short exact sequence of $H$-modules
	\begin{equation}\label{delta-2}
		\begin{tikzcd}
			1 \arrow[r] & \mu_n \arrow[r] & \mu_{ne} \arrow[r] & \mu_e \arrow[r] & 1. 
		\end{tikzcd}
	\end{equation}
	Let $f\in L^\times$ be such that $f\mu_{e}\in (L^\times/\mu_{e})^H$. Consider the map $\Z/n\Z\to L^\times/L^{\times n}$ taking $1$ to $f L^{\times n}$. We claim that this map is $H$-equivariant: indeed, by assumption, $\mu_{ne}\subset L^\times$, and hence for all $h\in H$ we have $(h-1)f\in \mu_{e}\subset (\mu_{n e})^n\subset L^{\times n}$,
	from which the claim follows. Tensorization with $A(-1)$ and passage to $H$-invariants now yields, for every such $f$, a map $\beta_f\colon A(-1)^H\to (A(-1)\otimes L^\times)^H$. Concretely, $\beta_f(a)=a\otimes f$ for all $a\in A(-1)^H$. This defines a homomorphism
	\begin{equation}\label{beta}\beta\colon A(-1)^H\otimes (L^\times/\mu_{e})^H\to (A(-1)\otimes L^\times)^H,\qquad a\otimes f \mu_{e}\mapsto \beta_f(a)=a\otimes f.
	\end{equation}
	
	\begin{prop}\label{diagram}
		Let $e,n\geq 1$ be integers, let $H$ be a finite group of exponent dividing $e$, let $A$ be a finite $H$-module such that $nA=0$, and let $L/K$ be a Galois field extension with Galois group $H$, such that $K$ contains a primitive root of unity of order $ne$. Then the diagram
		\[
		\begin{tikzcd}
			A(-1)^H\otimes (L^\times/\mu_{e})^H  \arrow[d,"\on{id}\otimes\partial_1"] \arrow[r,"\beta"] &  (A(-1)\otimes L^\times)^H \arrow[dd,"\gamma"]  \\
			A(-1)^H\otimes H^1(H,\mu_{e}) \arrow[d,"\on{id}\otimes\partial_2"]  \\
			A(-1)^H\otimes H^2(H,\mu_n) \arrow[r,"\cup"] & H^2(H,A)
		\end{tikzcd}
		\]
		is anti-commutative.
	\end{prop}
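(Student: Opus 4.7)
The plan is to verify anti-commutativity by computing explicit $2$-cocycle representatives for the two images of an elementary tensor $a\otimes f\mu_e$. Fix $a\in A(-1)^H$ and $f\in L^\times$ with $(h-1)f\in\mu_e$ for every $h\in H$. Since $\mu_{ne}\subset K^\times$, I may choose for each $h\in H$ an element $\eta_h\in\mu_{ne}$ with $\eta_h^n = h(f)/f$, normalized so that $\eta_1 = 1$. The key fact, used repeatedly below, is that $H$ acts trivially on $\mu_{ne}\subset K^\times$, so $\sigma(\eta_h) = \eta_h$ for every $\sigma\in\Gamma_K$ and every $h\in H$.

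For the down-then-right route, unwinding the Bocksteins gives $\partial_1(f\mu_e)(h) = h(f)/f\in\mu_e$, and lifting this cocycle to $\mu_{ne}$ via the $\eta_h$ yields
\[
\partial_2\bigl(\partial_1(f\mu_e)\bigr)(h,h') \;=\; h(\eta_{h'})\,\eta_h\,\eta_{hh'}^{-1} \;=\; \eta_h\,\eta_{h'}\,\eta_{hh'}^{-1}\;\in\;\mu_n.
\]
Cupping with $a$ via the $H$-isomorphism $A(-1)\otimes\mu_n\simeq A$ then produces the $A$-valued $2$-cocycle
\[
\theta(h,h') \;\coloneqq\; a(\eta_h) + a(\eta_{h'}) - a(\eta_{hh'}).
\]

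For the right-then-down route, $\beta(a\otimes f\mu_e) = a\otimes f$, and $\iota(a\otimes f)\in H^1(L,A)^H$ is represented by the Kummer cocycle $\sigma\mapsto a(\sigma(\alpha)/\alpha)$ on $\Gamma_L$, where $\alpha\in L^{\on{sep}}$ satisfies $\alpha^n = f$. To compute the transgression, I extend this to a $1$-cochain on $\Gamma_K$: for $\sigma\in\Gamma_K$ with $h = \sigma|_L\in H$, the identity $(\sigma(\alpha)/\alpha)^n = h(f)/f = \eta_h^n$ implies $\zeta_\sigma\coloneqq \sigma(\alpha)/(\alpha\eta_h)\in\mu_n$, and I set $\tilde c(\sigma)\coloneqq a(\zeta_\sigma)$, which restricts to the original cocycle on $\Gamma_L$ since $\eta_1 = 1$. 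A direct computation using $\sigma(\eta_{h'}) = \eta_{h'}$ yields $\zeta_\sigma\,\sigma(\zeta_\tau)\,\zeta_{\sigma\tau}^{-1} = \eta_{hh'}\,\eta_h^{-1}\,\eta_{h'}^{-1}$ with $h' = \tau|_L$, and hence
\[
d\tilde c(\sigma,\tau) \;=\; a\bigl(\zeta_\sigma\,\sigma(\zeta_\tau)\,\zeta_{\sigma\tau}^{-1}\bigr) \;=\; -\theta(h,h').
\]
In particular $d\tilde c(\sigma,\tau)$ depends only on $(h,h')$, so it is inflated from an $H$-cocycle, which by the cochain description of the transgression represents $\gamma(\beta(a\otimes f\mu_e))$. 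Thus the two routes produce $2$-cocycles that differ by a sign.

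The main technical point is the correct setup of the $1$-cochain lift $\tilde c$, for which the auxiliary choice $\{\eta_h\}\subset\mu_{ne}$ is indispensable; the hypothesis $\mu_{ne}\subset K$ is used precisely to make the $\eta_h$ available as $H$-fixed elements so that no stray $\sigma$-actions appear. The sign, which accounts for the anti- rather than plain commutativity of the diagram, ultimately comes from the fact that the coboundary of $\{\zeta_\sigma\}$ produces the inverse $\eta_{hh'}\eta_h^{-1}\eta_{h'}^{-1}$ of the Bockstein class $\eta_h\eta_{h'}\eta_{hh'}^{-1}$.
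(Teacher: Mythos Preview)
Your proof is correct and takes a genuinely different route from the paper's. The paper first reduces by functoriality to the universal case $A=\mu_n$ with trivial $H$-action, then establishes a separate lemma constructing a Galois $G$-algebra $M=L[t]/(t^n-f)$ together with a morphism of extensions from $1\to\Gamma_L\to\Gamma_K\to H\to 1$ to $1\to\mu_n\to\mu_{ne}\to\mu_e\to 1$; anti-commutativity then follows from the functoriality of transgression and the identity $\on{tg}(1)=-\theta$ of \cite[Theorem~2.4.4]{neukirch2008cohomology}. You instead work at the cocycle level for arbitrary $A$, producing an explicit $1$-cochain lift $\tilde c$ on $\Gamma_K$ whose coboundary is visibly inflated from $H$, and you read off the transgression directly from the cochain description. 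Your auxiliary family $\{\eta_h\}\subset\mu_{ne}$ plays exactly the role of the elements $c_h$ in the paper's lemma, so the two arguments are close cousins; yours is more self-contained and avoids both the algebra $M$ and the appeal to \cite{neukirch2008cohomology}, while the paper's version makes the underlying Kummer geometry (the $\mu_{ne}$-torsor $M/K$) explicit and isolates it as a reusable lemma. One notational quibble: $a(\eta_h)$ is not literally defined since $\eta_h\in\mu_{ne}$ while $a\colon\mu_n\to A$; your expression for $\theta(h,h')$ should be read as $a(\eta_h\eta_{h'}\eta_{hh'}^{-1})$, which is well-defined because the argument lies in $\mu_n$.
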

	
	For the proof of \Cref{diagram}, the key point is the following observation.
	
	\begin{lemma}\label{galois-algebras}
		Let $e,n\geq 1$ be integers, let $H$ be a finite group of exponent dividing $e$, let $L/K$ be a Galois field extension with Galois group $H$, such that $K$ contains a primitive root of unity of order $ne$. Let $f\in L^\times$ be such that $f\mu_e\in (L^\times/\mu_e)^H$. Let $\rho_f\colon \Gamma_L\to \mu_n$ be the continuous homomorphism corresponding to $f L^\times$ under the Kummer isomorphism $L^\times/L^{\times n}\xrightarrow{\sim} H^1(L,\mu_n)$. Then there exists a commutative diagram with exact rows
		\[
		\begin{tikzcd}
			1 \arrow[r]  & \Gamma_L \arrow[r]\arrow[d,"\rho_f"] & \Gamma_K \arrow[r]\arrow[d]  & H \arrow[r]\arrow[d,"\partial_1(f\mu_e)"]  & 1 \\
			1 \arrow[r] & \mu_n \arrow[r] & \mu_{ne} \arrow[r] & \mu_e \arrow[r] & 1.
		\end{tikzcd}
		\]
	\end{lemma}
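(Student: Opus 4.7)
The plan is to construct the middle vertical arrow explicitly by extracting an $n$-th root of $f$ in a separable closure, and then to verify commutativity of each square by direct cocycle computation. Since $n$ is invertible in $K$, fix a separable closure $K^{\on{sep}}\supset L$ and a choice of $g\in K^{\on{sep}}$ with $g^n=f$. Define
\[
\psi\colon \Gamma_K\longrightarrow (K^{\on{sep}})^\times, \qquad \psi(\tau)\coloneqq \tau(g)/g.
\]
For $\tau\in\Gamma_K$ with image $h\in H$, one has $\psi(\tau)^n=\tau(f)/f=h(f)/f$, and the hypothesis $f\mu_e\in(L^\times/\mu_e)^H$ gives $h(f)/f\in\mu_e$, so $\psi(\tau)^{ne}=1$, i.e.\ $\psi$ lands in $\mu_{ne}$.

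Next I would check that $\psi$ is a group homomorphism. For $\tau_1,\tau_2\in\Gamma_K$,
\[
\psi(\tau_1\tau_2)=\frac{\tau_1(\tau_2(g))}{\tau_1(g)}\cdot\frac{\tau_1(g)}{g}.
\]
The key observation is that $\tau_2(g)/g=\psi(\tau_2)\in\mu_{ne}\subset K^\times$, using the hypothesis that $K$ contains a primitive root of unity of order $ne$; hence $\tau_1$ fixes $\psi(\tau_2)$, and the identity collapses to $\psi(\tau_1\tau_2)=\psi(\tau_1)\psi(\tau_2)$. Continuity is automatic.

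Commutativity of the two squares is then a direct check. For the left square, any $\sigma\in\Gamma_L$ fixes $f$, so $\psi(\sigma)^n=1$, meaning $\psi(\sigma)\in\mu_n$; by the standard description of the Kummer isomorphism $L^\times/L^{\times n}\xrightarrow{\sim} H^1(L,\mu_n)$, the character $\sigma\mapsto\sigma(g)/g$ is precisely $\rho_f$, so $\psi|_{\Gamma_L}=\rho_f$. For the right square, the composite $\Gamma_K\xrightarrow{\psi}\mu_{ne}\twoheadrightarrow\mu_e$ sends $\tau$ to $\psi(\tau)^n=h(f)/f$, which depends only on the image $h\in H$; and by the usual cocycle formula for the connecting homomorphism of the sequence (\ref{delta-1}), the map $h\mapsto h(f)/f$ is exactly $\partial_1(f\mu_e)$.

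I do not expect a serious obstacle: the statement is essentially an unwinding of definitions once one guesses the correct middle map. The one point that requires a moment of thought is the verification that $\psi$ is a homomorphism, which relies critically on $\mu_{ne}\subset K$ (so that the \enquote{twisting cocycle} $\tau_2(g)/g$ is $\Gamma_K$-invariant), and this is precisely why the hypothesis of a primitive $ne$-th root of unity in $K$ appears in the statement.
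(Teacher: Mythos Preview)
Your argument is correct. Both proofs rest on the same idea—adjoin an $n$-th root $g$ of $f$ and study the Galois action on it—but the executions differ. The paper packages this by building the \'etale $K$-algebra $M=L[t]/(t^n-f)$, producing an intermediate group $G$ sitting in a central extension $1\to\mu_n\to G\to H\to 1$, invoking the classification of Galois $G$-algebras to obtain $\Gamma_K\to G$, and then constructing a character $\chi\colon G\to\mu_{ne}$ via a second connecting map; the desired diagram is the composite. You instead write down the map $\psi(\tau)=\tau(g)/g$ directly and verify everything by hand, which bypasses the Galois-algebra formalism entirely. Your route is shorter and more elementary; the paper's factorization through $G$ is more conceptual and makes the pushout structure visible, but for this lemma as stated your direct cocycle computation is perfectly adequate and arguably cleaner. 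The crucial point—that $\mu_{ne}\subset K$ forces the a priori $1$-cocycle $\psi$ to be a genuine homomorphism—is identified correctly in both.
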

	
	\begin{proof}
		Consider the \'etale $F$-algebra $M\coloneqq L[t]/(t^n-f)$, and let $f^{1/n}\coloneqq t$. For every $h\in H$, we have $(h-1)f=\partial_1(f\mu_e)(h)\in \mu_e=(\mu_{ne})^n$, and hence we may choose $c_h\in \mu_{ne}$ such that $(h-1)f=(c_h)^n$. We can define a $K$-algebra automorphism $\tilde{h}\in \on{Aut}_K(M)$ such that $\tilde{h}(f^{1/n})=c_h f^{1/n}$ and $\tilde{h}$
		acts on $L$ as $h$. We also have $\mu_n$ acting naturally on $M$ over $L$. We define $G\subset \on{Aut}_K(M)$ as the subset of elements of the form $\zeta\tilde{h}$, where $\zeta\in\mu_n$ and $h\in H$. Observe that all the $\tilde{h}$ commute with elements in $\mu_n$, and that for all $h_1,h_2,h_3\in H$ such that $h_1h_2=h_3$, the element $\tilde{h}_1\tilde{h}_2$ differs from $\tilde{h}_3$ by an element of $\mu_n$. Thus $G$ is in fact a subgroup of $\on{Aut}_K(M)$, and we have a central exact sequence 
		\[
		\begin{tikzcd}
			1 \arrow[r] &  \mu_n \arrow[r] & G \arrow[r] & H \arrow[r] & 1,
		\end{tikzcd}
		\]
		where the map $G\to H$ sends $\tilde{h}$ to $h$ for every $h\in H$. Then $M/K$ is a Galois $G$-algebra, that is, $M^G=K$ and $|G|=[M:K]$; see \cite[Definition 18.15]{knus1998book}.
		
		Recall that the non-abelian cohomology group $H^1(K,G)=\on{Hom}(\Gamma_K,G)/\sim$, where $\sim$ is the equivalence relation determined by conjugation by elements of $G$, classifies isomorphism classes of Galois $G$-algebras over $K$; see \cite[Example 28.15]{knus1998book}. Let $\Gamma_K\to G$ be a continuous homomorphism corresponding to $M/K$.
		We obtain a commutative diagram with exact rows
		\begin{equation}\label{galois-algebras-diag1}
			\begin{tikzcd}
				1 \arrow[r]  & \Gamma_L \arrow[r]\arrow[d,"\rho_f"] & \Gamma_K \arrow[r]\arrow[d]  & H \arrow[r]\arrow[d,equal]  & 1 \\
				1 \arrow[r] & \mu_n \arrow[r] & G \arrow[r] & H \arrow[r] & 1,
			\end{tikzcd}
		\end{equation}
		where the homomorphism $\Gamma_K \to H$ is defined as the composite $\Gamma_K\to G\to H$. The commutativity of the left square follows from the fact that, by Kummer theory, the homomorphism $\rho_f$ corresponds to the Galois $\mu_n$-algebra $M/L$ under the isomorphism of \cite[Example 28.15]{knus1998book}. Similarly to the definition of $\partial_1$, we let $\tilde{\partial}_1\colon (M^{\times}/\mu_{ne})^G\to H^1(G,\mu_{ne})$ be the connecting homomorphism associated to the exact sequence of $G$-modules 
		\[
		\begin{tikzcd}
			1 \arrow[r] & \arrow[r]  \mu_{ne} & \arrow[r] M^\times  & M^{\times}/\mu_{ne} \arrow[r] & 1,
		\end{tikzcd}
		\]
		and we let $\chi\colon G \to \mu_{ne}$ be the image of $f^{1/n}$ under $\tilde{\partial}_1$. In formulas, we have $(g-1)f^{1/n}=\chi(g)$ for every $g\in G$. We obtain a commutative diagram with exact rows
		\begin{equation}\label{galois-algebras-diag2}
			\begin{tikzcd}
				1 \arrow[r] & \mu_n \arrow[r]  \arrow[d,equal]   &  G  \arrow[r]  \arrow[d,"\chi"] &  H  \arrow[r]  \arrow[d,"\partial_1(f\mu_e)"] & 1 \\
				1 \arrow[r] & \mu_n \arrow[r]    &  \mu_{n e} \arrow[r] &  \mu_{e}  \arrow[r] & 1.
			\end{tikzcd}
		\end{equation}
		Combining (\ref{galois-algebras-diag1}) and (\ref{galois-algebras-diag2}) yields the desired diagram.
	\end{proof}
	
	\begin{proof}[Proof of \Cref{diagram}]
		We first reduce to the case where $A=\mu_n$ with trivial $H$-action. For this, fix $f\in L^\times$ such that $f\mu_{e}\in (L^\times/\mu_{e})^H$. We must show that the composite $\gamma\circ\beta_f$ sends $a\in A(-1)^H$ to $-a\cup (\partial_2\partial_1)(f\mu_e)$. Let $a\in A(-1)^H$, and let $\psi_a\colon \Z/n\Z\to A(-1)$ be the homomorphism taking $1$ to $a$. Since $\beta_f$ and $\gamma$ are functorial in $A$, we have a commutative diagram
		\[
		\begin{tikzcd}
			\Z/n\Z \arrow[r,"\beta_f"] \arrow[d,"\psi_a"] & ((\Z/n\Z)\otimes L^\times)^H \arrow[r,"\gamma"] \arrow[d,"\psi_a\otimes\on{id}"] & H^2(H,\mu_n) \arrow[d,"(\psi_a)_*"]\\
			A(-1)^H \arrow[r,"\beta_f"] & (A(-1)\otimes L^\times)^H \arrow[r,"\gamma"] & H^2(H,A). 
		\end{tikzcd}
		\]
		As cup products and connecting homomorphisms are also functorial in $A$, if the composite of the top horizontal maps is the opposite of the cup product with $(\partial_2\partial_1)(f\mu_e)$, then $(\gamma\beta_f)(a)=-a\cup (\partial_2\partial_1)(f\mu_e)$. We may thus assume that $A=\mu_n$ with the trivial $H$-action. 
		
		In this case, under the identifications $\mu_n(-1)\otimes L^\times=\Z/n\Z\otimes L^\times = L^\times/L^{\times n}$, the map $\gamma$ takes the form
		\[\gamma\colon (L^\times/L^{\times n})^H\xrightarrow{\sim}H^1(L,\mu_n)^H\xrightarrow{\on{tg}}H^2(H,\mu_n)\]
		where the left map is the Kummer isomorphism.
		
		Let $f\in L^\times$ be such that $f\mu_{e}\in (L^\times/\mu_{e})^H$. The commutative diagram constructed in \Cref{galois-algebras} and the functoriality of the transgression map yield a commutative square
		\[
		\begin{tikzcd}
			H^1(\mu_n, \mu_n)^{\mu_e} \arrow[d,"(\rho_f)^*"] \arrow[r,"\on{tg}"] &  H^2(\mu_e,\mu_n) \arrow[d,"(\partial_1(f\mu_e))^*"]   \\
			H^1(L,\mu_n)^H \arrow[r,"\on{tg}"]     &  H^2(H,\mu_n).
		\end{tikzcd}
		\]
		If $1\in H^1(\mu_n, \mu_n)^{\mu_e}$ denotes the identity map $\mu_n\to \mu_n$, then $(\rho_f)^*(1)$ is the homomorphism corresponding to $fL^{\times n}\in L^\times/L^{\times n}$ via the Kummer isomorphism. It follows that $\gamma(f)=\on{tg}((\rho_f)^*(1))=(\partial_1(f\mu_e))^*(\on{tg}(1))$. It remains to check that $(\partial_1(f\mu_e))^*(\on{tg}(1))=-(\partial_2\partial_1)(f\mu_e)$. 
		
		Let $\theta\in H^2(\mu_e,\mu_n)$ be the class of the central extension (\ref{delta-2}). By \cite[Theorem 2.4.4]{neukirch2008cohomology}, we have $\on{tg}(1)=-\theta$. Thus, letting $\chi\coloneqq \partial_1(f\mu_e)\colon H \to \mu_e$, we have $(\partial_1(f\mu_e))^*(\on{tg}(1))=-\chi^*(\theta)$. In fact, the pullback of (\ref{delta-2}) by $\partial_1(f\mu_e)$ is the top row of (\ref{galois-algebras-diag2}). Observe that $(\partial_2\partial_1)(f\mu_e) = \partial_2(\chi)$ is also the pullback of (\ref{delta-2}) with respect to $\chi$, that is, $(\partial_2\partial_1)(f\mu_e)=\chi^*(\theta)$. Thus 
		\[(\partial_1(f\mu_e))^*(\on{tg}(1))=-\chi^*(\theta)=-(\partial_2\partial_1)(f\mu_e).\qedhere\]
	\end{proof}
	
	For the proof of \Cref{thm-negligible}, we will use the following consequence of \Cref{diagram}. For every subgroup $H'\subset H$, consider the map
	\begin{equation}\label{varphi}\varphi_{H'}\colon A^{H'}\otimes H^2(H',\Z)\xrightarrow{\cup}H^2(H',A)\xrightarrow{\on{cor}} H^2(H, A)\end{equation}
	which appears in the statement of \Cref{thm-negligible}. We define $\smash{\overbar{H}}^2(H,A)$ as the subgroup of $H^2(H,A)$ generated by the images of the $\varphi_{H'}$, where $H'$ ranges over all the subgroups of $H$.
	
	\begin{prop}\label{image-varphi}
		Let $e,n\geq 1$ be integers, let $H$ be a finite group of exponent dividing $e$, let $A$ be a finite $H$-module such that $nA=0$, and let $L/K$ be a Galois field extension with Galois group $H$, such that $K$ contains a primitive root of unity of order $ne$. For all subgroups $H'\subset H$, all $a\in A(-1)^{H'}$ and all $f\in L^\times$ such that $f\mu_e\in (L^\times/\mu_e)^{H'}$, we have 
		\[
		\gamma(N_{H/H'}(a\otimes f)) \in \on{Im}(\varphi_{H'}) \subset \smash{\overbar{H}}^2(H,A).
		\]
	\end{prop}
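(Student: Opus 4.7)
The plan is to reduce via corestriction from $H$ to $H'$, apply \Cref{diagram} with $H'$ in place of $H$, and then lift the resulting cohomology class from $H^2(H',\mu_n)$ to $H^2(H',\Z)$.

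First, since $\iota$ is an isomorphism of $H$-modules, it commutes with the norm $N_{H/H'}$; combining this with \Cref{cor-tg} yields the identity
\[
\gamma(N_{H/H'}(a\otimes f)) = \on{cor}^{H'}_H(\gamma_{H'}(a \otimes f)),
\]
where $\gamma_{H'}\colon (A(-1)\otimes L^\times)^{H'}\to H^2(H',A)$ is the map defined by the analogous recipe for the Galois extension $L/L^{H'}$ (note that $L^{H'}\supset K$ still contains a primitive $ne$-th root of unity). This reduces the problem to showing that $\gamma_{H'}(a\otimes f)$ lies in the image of $A^{H'}\otimes H^2(H',\Z) \xrightarrow{\cup} H^2(H',A)$, since applying $\on{cor}^{H'}_H$ then produces an element of $\on{Im}(\varphi_{H'})$.

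Next, \Cref{diagram} applied to $H'$ gives $\gamma_{H'}(a\otimes f) = -a\cup (\partial_2\partial_1)(f\mu_e)$, with the cup product taken along the evaluation pairing $A(-1)\otimes \mu_n\to A$. The key step is to lift $(\partial_2\partial_1)(f\mu_e) \in H^2(H',\mu_n)$ to a class in $H^2(H',\Z)$. Fixing compatible primitive roots of unity with $\zeta_n=\zeta_{ne}^e$ and $\zeta_e=\zeta_{ne}^n$, I would construct a morphism of short exact sequences of trivial $H'$-modules
\[
\begin{tikzcd}
0 \arrow[r] & \Z \arrow[r,"e"] \arrow[d,"k\mapsto \zeta_n^k"'] & \Z \arrow[r] \arrow[d,"k\mapsto \zeta_{ne}^k"] & \mu_e \arrow[r] \arrow[d,equal] & 0 \\
0 \arrow[r] & \mu_n \arrow[r] & \mu_{ne} \arrow[r] & \mu_e \arrow[r] & 0.
\end{tikzcd}
\]
Naturality of the connecting homomorphism then yields $\partial_2 = \mu_*\circ \delta$, where $\delta\colon H^1(H',\mu_e)\to H^2(H',\Z)$ is the connecting map of the top row and $\mu_*\colon H^2(H',\Z)\to H^2(H',\mu_n)$ is induced by $k\mapsto\zeta_n^k$. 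Setting $\xi := \delta(\partial_1(f\mu_e))\in H^2(H',\Z)$, I obtain $(\partial_2\partial_1)(f\mu_e) = \mu_*(\xi)$.

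Finally, naturality of the cup product applied to the factorization $A(-1)\otimes \Z \to A(-1)\otimes \mu_n \to A$ (the second map being evaluation) gives
\[
a \cup (\partial_2\partial_1)(f\mu_e) = a \cup \mu_*(\xi) = a^\flat \cup \xi,
\]
where $a^\flat := a(\zeta_n) \in A^{H'}$ (this is $H'$-invariant because $H'$ fixes $\zeta_n$ and $a$ is $H'$-equivariant). Hence $\gamma_{H'}(a\otimes f) = -a^\flat\cup \xi$ lies in the desired image, and applying $\on{cor}^{H'}_H$ produces
\[
\gamma(N_{H/H'}(a\otimes f)) = -\varphi_{H'}(a^\flat\otimes \xi) \in \on{Im}(\varphi_{H'}) \subset \smash{\overbar{H}}^2(H,A).
\]
The most delicate step is constructing the morphism of short exact sequences and verifying $\partial_2 = \mu_*\circ\delta$; once this identification is in place, the remainder of the argument is a routine application of naturality of the cup product and of \Cref{diagram}.
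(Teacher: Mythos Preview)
Your proof is correct and follows the same overall architecture as the paper: reduce to $H'$ via corestriction (using \Cref{cor-tg} and $H$-equivariance of $\iota$), apply \Cref{diagram} to write $\gamma_{H'}(a\otimes f)=-a\cup(\partial_2\partial_1)(f\mu_e)$, and then show that this cup product lies in the image of $A^{H'}\otimes H^2(H',\Z)\xrightarrow{\cup} H^2(H',A)$.

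The genuine difference lies in this last step. The paper proves a separate lemma (\Cref{tor-trick}), a universal coefficient theorem argument, to show abstractly that the image of the connecting map $\partial_2$ associated to (\ref{delta-2}) is contained in the image of $H^2(H',\Z)\otimes\mu_n\xrightarrow{\cup}H^2(H',\mu_n)$; it then absorbs the $\mu_n$ factor into $A(-1)$ via the evaluation isomorphism $A(-1)\otimes\mu_n\cong A$. You instead bypass \Cref{tor-trick} entirely by fixing a primitive $ne$-th root of unity and exhibiting an explicit morphism from $0\to\Z\xrightarrow{e}\Z\to\mu_e\to 0$ to (\ref{delta-2}), which factors $\partial_2$ as $\mu_*\circ\delta$ and produces a concrete preimage $\xi=\delta(\partial_1(f\mu_e))\in H^2(H',\Z)$ together with an explicit element $a^\flat=a(\zeta_n)\in A^{H'}$. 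Your route is more elementary and constructive; the paper's route is canonical, not depending on a choice of root of unity, and isolates the lifting step into a reusable lemma.
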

	
	We need the following lemma.
	
	\begin{lemma}\label{tor-trick}
		Let $H$ be a finite group, and let $0 \to X \to Y \to Z \to 0$ be an exact sequence of abelian groups considered as
		trivial $H$-modules. Then
		\[
		\on{Im}(H^1(H, Z)\xrightarrow{\partial} H^2(H, X)) \subset \on{Im}(H^2(H, \Z) \otimes X \xrightarrow{\cup} H^2(H, X)).
		\]
	\end{lemma}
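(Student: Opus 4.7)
The strategy is to compare the given sequence $0 \to X \to Y \to Z \to 0$ with a $\Z$-free resolution of $Z$, viewed as an exact sequence of trivial $H$-modules. Specifically, the plan is to choose a surjection $F \twoheadrightarrow Z$ from a free abelian group $F$, set $R \coloneqq \ker(F \to Z)$, and use that $R$ is again free abelian (as a subgroup of a free abelian group). This yields a short exact sequence $0 \to R \to F \to Z \to 0$ of trivial $H$-modules with $R$ and $F$ both $\Z$-free.

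Next, since $F$ is $\Z$-projective, the identity on $Z$ lifts to a morphism of short exact sequences
\[
\begin{tikzcd}
0 \arrow[r] & R \arrow[r]\arrow[d,"\beta"] & F \arrow[r]\arrow[d] & Z \arrow[r]\arrow[d,equal] & 0 \\
0 \arrow[r] & X \arrow[r] & Y \arrow[r] & Z \arrow[r] & 0,
\end{tikzcd}
\]
for some $\Z$-linear (and hence automatically $H$-equivariant) map $\beta\colon R \to X$. By naturality of the connecting homomorphism one then has $\partial = \beta_* \circ \partial'$, where $\partial'\colon H^1(H,Z) \to H^2(H,R)$ is the connecting map of the top row.

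To conclude, I would fix a $\Z$-basis $(e_j)_{j \in J}$ of $R$ and set $x_j \coloneqq \beta(e_j) \in X$. Since $H$ is finite, $\Z$ is finitely presented as a $\Z[H]$-module, so $H^2(H,-) = \on{Ext}^2_{\Z[H]}(\Z,-)$ commutes with arbitrary direct sums; this yields a canonical identification $H^2(H,R) \cong \bigoplus_{j \in J} H^2(H,\Z)$, via which $\beta_*$ becomes $(\alpha_j)_j \mapsto \sum_j \alpha_j \cup x_j$. Writing $\partial'(\varphi) = (\alpha_j)_{j \in J}$ (a finitely supported tuple) for $\varphi \in H^1(H,Z)$, one obtains
\[
\partial(\varphi) = \beta_*(\partial'(\varphi)) = \sum_{j \in J} \alpha_j \cup x_j,
\]
which plainly lies in the image of $H^2(H,\Z) \otimes X \xrightarrow{\cup} H^2(H,X)$.

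There is no serious obstacle here: once the diagram above is set up, the conclusion follows from naturality of $\partial$ together with the standard fact that $H^2$ of a finite group commutes with direct sums of trivial modules. The key observation, which is the only thing one might fail to spot, is that introducing the auxiliary $\Z$-free resolution of $Z$ replaces the abstract module $X$ by the free abelian group $R$, at the cost of the map $\beta\colon R\to X$; the coefficients of $\beta$ on a basis of $R$ are precisely the elements $x_j \in X$ with which the cup products are taken.
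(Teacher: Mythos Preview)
Your proof is correct and takes a genuinely different route from the paper's. The paper argues via the universal coefficient theorem: it sets up the short exact sequences
\[
0 \longrightarrow H^2(H,\Z) \otimes X \xrightarrow{\ \cup\ } H^2(H,X) \longrightarrow \on{Tor}^{\Z}_1(H^3(H,\Z), X) \longrightarrow 0
\]
(and the analogous one with $Y$ in place of $X$), observes that the induced map $\on{Tor}^{\Z}_1(H^3(H,\Z), X) \to \on{Tor}^{\Z}_1(H^3(H,\Z), Y)$ is injective because $\on{Tor}^{\Z}_2 = 0$, and finishes with a diagram chase using that $\partial(\varphi)$ maps to zero in $H^2(H,Y)$. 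In particular, the free resolution it takes is one of $X$, needed to establish the universal coefficient sequence.

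You instead resolve $Z$ and factor $\partial$ through $H^2(H,R)$ with $R$ free abelian; the decomposition $H^2(H,R) \cong \bigoplus_j H^2(H,\Z)$ (valid since $H$ is finite, so $\Z$ has a resolution by finitely generated free $\Z[H]$-modules) then exhibits $\partial(\varphi)$ explicitly as a finite sum $\sum_j \alpha_j \cup x_j$. This is more direct and avoids $\on{Tor}$ and the universal coefficient theorem altogether. The paper's approach has the side benefit of identifying the cokernel of the cup-product map as $\on{Tor}^{\Z}_1(H^3(H,\Z), X)$, but for the lemma as stated your argument is shorter and more elementary.
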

	
	\begin{proof}

		We have a commutative diagram
		\[
		\begin{tikzcd}
			&& H^1(H, Z)  \arrow[d,"\partial"] \\
			0 \arrow[r] &  H^2(H, \Z) \otimes X \arrow[d]  \arrow[r,"\cup"] &  H^2(H, X) \arrow[d] \arrow[r] & \on{Tor}^{\Z}_1(H^3(H, \Z), X)  \arrow[d,hook]\arrow[r] & 0 \\
			0 \arrow[r] &  H^2(H, \Z) \otimes Y \arrow[r,"\cup"]    &  H^2(H, Y) \arrow[r] &  \on{Tor}^{\Z}_1(H^3(H, \Z), Y) \arrow[r] & 0.
		\end{tikzcd}
		\]
		The exactness of the rows is a consequence of the universal coefficient theorem: for lack of a precise reference in the cohomological setting, we sketch a proof. Pick a $\Z$-free resolution $0 \to F_1 \to F_2 \to X \to 0$, and consider the associated long exact sequence of $H$-cohomology:
		\[H^2(H,F_1)\to H^2(H,F_2)\to H^2(H,X)\to H^3(H,F_1)\to H^3(H,F_2).\]
		We have identifications $H^i(H,F_j)=H^i(H,\Z)\otimes F_j$ for all $i\geq 0$ and $j=1,2$. Now conclude by using that 
		\begin{align*}
			\on{Coker}[H^2(H,\Z)\otimes F_1\to H^2(H,\Z)\otimes F_2]&=H^2(H,\Z)\otimes X,\\
			\on{Ker}[H^3(H,\Z)\otimes F_1\to H^3(H,\Z)\otimes F_2]&=\on{Tor}^{\Z}_1(H^3(H,\Z),X).
		\end{align*}
		This completes the proof of the exactness of the rows. Since $\on{Tor}^{\Z}_2=0$, the functor $\on{Tor}^{\Z}_1$ is left exact, and so the vertical map on the right is injective. The conclusion follows from a diagram chase.
	\end{proof}

	\begin{proof}[Proof of \Cref{image-varphi}]
		We first consider the case when $H=H'$. In this case, the map $\varphi_H$ is the cup product $\cup\colon A^H\otimes H^2(H,\Z)\to H^2(H,A)$.  We apply \Cref{tor-trick} to the short exact sequence (\ref{delta-1}) and conclude that for every $f\in L^\times$ such that $f\mu_e \in (L^\times / \mu_e)^H$ we have
		\[
		(\partial_2 \partial_1)(f\mu_e) \in \on{Im}[H^2(H, \Z) \otimes \mu_n \xrightarrow{\cup} H^2(H, \mu_n)].
		\]
		It follows from \Cref{diagram} that for every $a \in A(-1)^H$ we have
		\[
		(\gamma\beta)(a \otimes f\mu_e) = -a \cup(\partial_2\partial_1)(f\mu_e) \in \on{Im}[A(-1)^H \otimes H^2(H, \Z) \otimes \mu_n \xrightarrow{\cup} H^2(H, A)].
		\]
		As $nA=0$, we have a canonical identification $A(-1) \otimes \mu_n = A$ given by evaluation. We deduce that 
		\[
		(\gamma\beta)(a \otimes f\mu_e)  = -a \cup(\partial_2\partial_1)(f\mu_e)  \in \on{Im}[A^H \otimes H^2(H, \Z) \xrightarrow{\cup} H^2(H, A)].
		\]
		As $(\gamma\beta)(a \otimes f\mu_e)=\gamma(a\otimes f)$, this completes the proof when $H'=H$.
		
		Now let $H'\subset H$ be a subgroup, let $a \in A(-1)^{H'}$, and let $f\in L^\times$ be such that $f\mu_e \in (L^\times / \mu_e)^{H'}$. By the previous case applied to $H'$, we have
		\[
		\gamma(a \otimes f) \in \on{Im}[A^{H'} \otimes H^2(H', \Z) \xrightarrow{\cup} H^2(H', A)].
		\]
		By \Cref{cor-tg} and the fact that the connecting homomorphism $\iota$ is compatible with corestrictions \cite[Proposition 1.5.2]{neukirch2008cohomology}, the map $\gamma=\on{tg}\circ \iota$ is compatible with corestrictions, and hence
		\[\gamma(N_{H/H'}(a\otimes f)) = \on{cor}^{H'}_H(\gamma(a\otimes f))\quad\text{in $H^2(H,A)$,}\]
		where on the left (resp. right) we are considering the map $\gamma$ for $H$ (resp. $H'$). We conclude that $\gamma(N_{H/H'}(a\otimes f))$ belongs to $\on{Im}(\varphi_{H'})$, as desired.
	\end{proof}
	
	\begin{rmk}
		Although we will not need this, we observe that the definition of $A(-1)$ does not depend on the choice of $n$, up to natural isomorphism: indeed, because $e(A)$ divides $n$, the natural surjective homomorphism $\pi\colon \mu_n\to \mu_{e(A)}$ induces an $H$-module isomorphism $\pi^*\colon \on{Hom}(\mu_{e(A)},A)\xrightarrow{\sim} \on{Hom}(\mu_n,A)$. Similarly, the definition of $\iota$ is independent of $n$, in the following sense: the surjection $\pi\colon \mu_n\to \mu_{e(A)}$ induces a commutative diagram
		\[
		\begin{tikzcd}
			\on{Hom}(\mu_{e(A)},A)\otimes L^\times \arrow[r,"\iota"] \arrow[d,"\wr\,\,\pi^*\otimes \on{id}"]  &  H^1(L,A) \arrow[d,equal] \\
			\on{Hom}(\mu_n,A)\otimes L^\times \arrow[r,"\iota"]   & H^1(L,A).
		\end{tikzcd}
		\]
		It follows that the definition of $\gamma$ is also independent of $n$.
	\end{rmk}

	\section{Proof of Theorem \ref{thm-negligible}}\label{section-4}
	
	Let $e,n\geq 1$ be integers, let $H$ be a finite group of exponent dividing $e$, let $A$ be a finite $H$-module such that $nA=0$, let $F$ be a field containing a primitive root of unity of order $ne$, and let $V$ be a faithful $H$-representation over $F$. In view of \Cref{connection-tg} and the definition of the map $\gamma$ of (\ref{gamma}) for the generic Galois extension $F(V)/F(V)^H$, we have
	\begin{equation}\label{connection-gamma}H^2(H,A)_{\on{neg}, F}=\on{Im}[(A(-1)\otimes F(V)^\times)^H\xrightarrow{\gamma} H^2(H,A)].
	\end{equation}
	We will prove \Cref{thm-negligible} by applying \Cref{image-varphi} to the case when $L/K$ is the generic extension $F(V)/F(V)^H$, in combination with \Cref{gener} below. 
	
	We view the faithful $H$-representation $V$ over $F$ as an affine space over $F$, and we denote by $V^{(1)}$ the set of codimension $1$ points of $V$. Because the Picard group of $V$ is trivial, we have a short exact sequence of $H$-modules
	\begin{equation}\label{div-sequence}
		\begin{tikzcd}
			1\arrow[r] &  F^\times \arrow[r] & F(V)^\times \arrow[r,"\on{div}"] & \on{Div}(V) \arrow[r] & 0,
		\end{tikzcd}
	\end{equation}
	where $\on{Div}(V)$ is the free abelian group on the set $V^{(1)}$, and where $\on{div}$ is the divisor map.
	
	\begin{lemma}\label{gener}
		Let $H$ be a finite group, let $B$ be a finite $H$-module, let $F$ be a field, and let $V$ be a faithful $H$-representation over $F$. For every $x\in V^{(1)}$, let $H_x\subset H$ be the stabilizer of $x$, and choose $f_x\in F(V)^\times$ such that $\on{div}(f_x)=x$. The group $(B\otimes F(V)^\times)^H$ is generated by $(B\otimes F^\times)^H$ and all elements of the form $N_{H/H_x}(b\otimes f_x)$, for all $x \in V^{(1)}$ and all $b\in B^{H_x}$.
	\end{lemma}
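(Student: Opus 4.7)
The plan is to analyze the divisor exact sequence (\ref{div-sequence}) after tensoring with $B$. Since $\on{Div}(V)$ is $\Z$-free, tensoring preserves exactness, yielding the short exact sequence of $H$-modules
\[
0 \to B \otimes F^\times \to B \otimes F(V)^\times \xrightarrow{\on{div}} B \otimes \on{Div}(V) \to 0.
\]
Taking $H$-invariants yields a left-exact sequence, so the lemma reduces to (i) describing the image of the divisor map on $H$-invariants, and (ii) lifting each generator of that image back to an $H$-invariant element of $B \otimes F(V)^\times$.

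For step (i), I would decompose $V^{(1)}$ into $H$-orbits and invoke Shapiro's lemma. Choosing a representative $x$ of each orbit $\mathcal{O}$, there is an $H$-equivariant isomorphism $\Z[\mathcal{O}] \cong \on{Ind}_{H_x}^H \Z$; the projection formula then gives $B \otimes \on{Ind}_{H_x}^H \Z \cong \on{Ind}_{H_x}^H (B|_{H_x})$, and Shapiro's lemma yields $(B \otimes \Z[\mathcal{O}])^H \cong B^{H_x}$, with $b \in B^{H_x}$ corresponding to the norm element $N_{H/H_x}(b \otimes x) = \sum_{[g] \in H/H_x} g(b) \otimes g(x)$. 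Summing over the orbits shows that $(B \otimes \on{Div}(V))^H$ is generated by these norm elements for $x \in V^{(1)}$ and $b \in B^{H_x}$.

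For step (ii), note that $b \otimes f_x$ is $H_x$-invariant modulo $B \otimes F^\times$, because $h(f_x)/f_x \in F^\times$ for $h \in H_x$ and $h(b) = b$. Consequently, the sum $\sum_{[g] \in H/H_x} g(b \otimes f_x)$ is $H$-invariant modulo $B \otimes F^\times$, and after absorbing a suitable correction term in $B \otimes F^\times$ one obtains an $H$-invariant lift $N_{H/H_x}(b \otimes f_x) \in (B \otimes F(V)^\times)^H$ whose image under $\on{div}$ is $N_{H/H_x}(b \otimes x)$. Finally, given any $\xi \in (B \otimes F(V)^\times)^H$, expressing $\on{div}(\xi)$ as a $\Z$-linear combination of the $N_{H/H_{x_i}}(b_i \otimes x_i)$ via step (i) and subtracting the corresponding sum of lifts $N_{H/H_{x_i}}(b_i \otimes f_{x_i})$ produces an $H$-invariant element of trivial divisor, which must lie in $(B \otimes F^\times)^H$. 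The main obstacle I anticipate is carrying out the lifting step: the naive norm sum is genuinely $H$-invariant only up to a coboundary obstruction in $H^1(H, B \otimes F^\times)$, and showing that this obstruction vanishes on each $N_{H/H_x}(b \otimes x)$ — most likely via a corestriction compatibility argument that reduces the problem to $H_x$ — is the technical heart of the proof.
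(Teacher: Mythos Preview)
Your step~(i) is exactly the paper's argument: tensor the divisor sequence with $B$, take $H$-invariants, and describe $(B\otimes\on{Div}(V))^H$ via the orbit decomposition of $V^{(1)}$. The paper carries out that last part by a bare-hands check on each orbit summand rather than by invoking Shapiro, but the content is identical.

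Where you diverge is step~(ii). The paper's proof does not address the lifting at all: after writing down the left-exact sequence it simply declares that it suffices to show $(B\otimes\on{Div}(V))^H$ is generated by the $N_{H/H_x}(b\otimes x)$, and proves that. Your instinct that there is an obstruction in $H^1(H,B\otimes F^\times)$ is correct, and in fact it does \emph{not} vanish in the stated generality: for $H=\Z/4\Z$ generated by $\sigma$, $B=\Z/2\Z$ with trivial action, $F=\Q$, $V$ the two-dimensional rotation representation, and $x=\{v_1=0\}$, one computes $(\sigma-1)\,N_{H/H_x}(1\otimes v_1)=1\otimes(-1)\neq 0$ in $(\Z/2\Z)\otimes\Q^\times$. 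So no abstract corestriction argument will make each norm element $H$-invariant, and the programme you sketch for step~(ii) cannot succeed as written. The actual resolution is that in the paper's only use of the lemma (the proof of Theorem~\ref{thm-negligible}) the field $F$ contains a primitive root of unity of order $e(B)\,e(H)$, and the paper verifies there that $(h-1)f_x\in\mu_{e(H)}\subset F^{\times e(B)}$ for $h\in H_x$; since $B$ is $e(B)$-torsion this forces $b\otimes f_x$ to be genuinely $H_x$-invariant, so the norm is honestly $H$-invariant and your step~(ii) becomes vacuous. In short: carry out step~(i) as you planned and stop, as the paper does; the invariance of the norm elements is not provable at this level of generality and is instead supplied by the roots-of-unity hypothesis at the point of application.
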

	
	\begin{proof}
		By the exactness of (\ref{div-sequence}), for every $x\in V^{(1)}$ one may find $f_x\in F(V)^\times$ such that $\on{div}(f_x)=x$, and $f_x$ is uniquely determined up to multiplication by an element of $F^\times$. As $\on{Div}(V)$ is a free abelian group, (\ref{div-sequence}) remains exact after tensorization with $B$. Taking $H$-invariants, we obtain an exact sequence
		\[
		\begin{tikzcd}
			1\arrow[r] &  (B\otimes F^\times)^H \arrow[r] & (B\otimes F(V)^\times)^H \arrow[r] & (B\otimes \on{Div}(V))^H.   
		\end{tikzcd}
		\]
		Therefore, it suffices to prove that the group $(B\otimes \on{Div}(V))^H$ is generated by all elements of the form $N_{H/H_x}(b\otimes x)$, where $b\in B^{H_x}$ and $x\in V^{(1)}$. For every $x\in V^{(1)}$, consider the $H$-submodule 
		\[M_x\coloneqq B\otimes \operatornamewithlimits{\textstyle\coprod}_{i=1}^{r} \Z\cdot h_i(x)\subset B\otimes\on{Div}(V),\]
		where $r\coloneqq [H:H_x]$, the $h_i$ form a system of representatives for $H/H_x$, and where $h_1$ is in $H_x$. Observe that $B\otimes \on{Div}(V)$ is the direct sum of the $M_x$, where $x\in V^{(1)}$ ranges over a set of representatives of the $H$-orbits. In order to conclude, it now suffices to observe that an element $m=\sum_{i=1}^r b_i\otimes h_i(x)\in M_x$ is $H$-invariant if and only if $b_1$ belongs to $B^{H_x}$ and $b_i=h_i(b_1)$ for all $i$, that is, letting $b\coloneqq b_1\in B^{H_x}$, if and only if $m=\sum_{i=1}^{k} h_i(b)\otimes h_i(x)=N_{H/H_x}(b\otimes x)$.
	\end{proof}

	\begin{proof}[Proof of Theorem \ref{thm-negligible}]
		We let $e\coloneqq e(H)$ and $n\coloneqq e(A)$. Recall that we defined the subgroup $\smash{\overbar{H}}^2(H,A)\subset H^2(H,A)$ as the image of the maps $\varphi_{H'}$ of (\ref{varphi}), where $H'$ ranges over all subgroups of $H$. With this notation, \Cref{thm-negligible} is equivalent to the assertion $H^2(H,A)_{\on{neg},F}=\smash{\overbar{H}}^2(H,A)$. 
		
		We first show that $\smash{\overbar{H}}^2(H,A)\subset H^2(H,A)_{\on{neg},F}$.	Let $H'\subset H$ be a subgroup. By \Cref{props}(3), to show that $H^2(H,A)_{\on{neg},F}$ contains the image of $\varphi_{H'}$, it is enough to prove that $H^2(H',A)_{\on{neg},F}$ contains the image of the cup product map $A^{H'}\otimes H^2(H',\Z)\to H^2(H',A)$. We may thus assume that $H'=H$.
		
		Let $a\in A^H$, and let $c\in H^2(H,\Z)$. We must show that $a\cup c\in H^2(H,A)$ is negligible over $F$. The connecting map $\partial\colon H^1(H,\Q/\Z)\to H^2(H,\Z)$ associated to the sequence $0\to \Z\to \Q\to \Q/\Z\to 0$ is an isomorphism. Let $\chi\in H^1(H,\Q/\Z)$ be such that $\partial(\chi)=c$, let $\smash{\overbar{H}}\coloneqq \on{Im}(\chi)\subset \Z/e\Z$, and let $\overline{\chi}\colon \smash{\overbar{H}}\to \Q/\Z$ be the character induced by $\chi$. Then $\chi$ is the inflation of $\overline{\chi}\in H^1(\smash{\overbar{H}},\Q/\Z)$, and so, by the compatibility of connecting maps with inflation maps \cite[Proposition 1.5.2]{neukirch2008cohomology}, $c$ is the inflation of $\overline{c}\coloneqq \partial(\overline{\chi})\in H^2(\smash{\overbar{H}},\Z)$, where now $\partial\colon H^1(\smash{\overbar{H}},\Q/\Z)\to H^2(\smash{\overbar{H}},\Z)$ is the connecting map in $\smash{\overbar{H}}$-cohomology. Let $\psi_a\colon \Z/n\Z\to A$ be the homomorphism of $H$-modules sending $1$ to $a$, where we view $\Z/n\Z$ as an $H$-module with trivial action. We have a commutative diagram
		\[
		\begin{tikzcd}
			\Z/n\Z \otimes H^2(\smash{\overbar{H}},\Z) \arrow[d,"\cup"] \arrow[r,"\on{id}\otimes\on{inf}"] &   \Z/n\Z\otimes H^2(H,\Z) \arrow[r,"(\psi_a)_*\otimes\on{id}"]\arrow[d,"\cup"]  & A^H\otimes H^2(H,\Z) \arrow[d,"\cup"] \\
			H^2(\smash{\overbar{H}},\Z/n\Z)  \arrow[r,"\on{inf}"] & H^2(H,\Z/n\Z) \arrow[r,"(\psi_a)_*"] & H^2(H,A). 
		\end{tikzcd}
		\]
		Now \Cref{props}(1) and (2) imply that if $1\cup\overline{c}\in H^2(\smash{\overbar{H}},\Z/n\Z)$ is negligible over $F$, then so is $a\cup c$. This reduces us to the following situation: $H=\Z/m\Z$ for some integer $m$ dividing $e$, $A=\Z/n\Z$, and $H$ acts trivially on $A$. Since $F$ contains a primitive root of order $ne$ and $m$ divides $e$, by \Cref{kummer-theory} every class in $H^2(\Z/m\Z,\Z/n\Z)$ is negligible over $F$, as desired. Thus $\smash{\overbar{H}}^2(H,A)\subset H^2(H,A)_{\on{neg},F}$.
		
		We now show that $H^2(H,A)_{\on{neg},F}\subset \smash{\overbar{H}}^2(H,A)$. By (\ref{connection-gamma}), it is enough to show that the map $\gamma$ of (\ref{gamma}) takes $(A(-1)\otimes F(V)^\times)^H$ to $\smash{\overbar{H}}^2(H,A)$. For this, we will combine \Cref{gener} and \Cref{image-varphi} for $n=e(A)$ and $e=e(H)$. Let $\smash{\overbar{F}}$ be an algebraic closure of $F$. As the map $\gamma$ is functorial in $F$, we have a commutative diagram
		\[
		\begin{tikzcd}
			(A(-1)\otimes F^\times)^H \arrow[r,hook]\arrow[d] & (A(-1)\otimes F(V)^\times)^H \arrow[r,"\gamma"]\arrow[d]  & H^2(H,A) \arrow[d,equal]  \\
			(A(-1)\otimes \overbar{F}\vphantom{F}^\times)^H \arrow[r,hook] & (A(-1)\otimes \overbar{F}(V)^\times)^H \arrow[r,"\gamma"] & H^2(H,A).
		\end{tikzcd}
		\]
		As $\smash{\overbar{F}}\vphantom{F}^\times$ is divisible and $A(-1)$ is finite, we have $A(-1)\otimes \smash{\overbar{F}}\vphantom{F}^\times=0$. It follows that $\gamma$ maps $(A(-1)\otimes F^\times)^H$ to zero. 
		
		Now let $x\in V^{(1)}$, let $H_x\subset H$ be the stabilizer of $x$, choose $f\in F(V)^\times$ such that $\on{div}(f)=x$, and let $a\in A(-1)^{H_x}$. In order to apply \Cref{image-varphi}, we must first check that $f\mu_e\in (F(V)^\times/\mu_e)^{H_x}$. For this, define $\chi\in H^1(H_x,F^\times)$ as the image of $x\in \on{Div}(X)$ under the connecting map for (\ref{div-sequence}), viewed as a sequence of $H_x$-modules. Since $H$ has exponent $e$, the image of $\chi\colon H_x\to F^\times$ is contained in $\mu_e$. By definition of the connecting map, for all $h\in H_x$ we have $(h-1)f=\chi(h)\in \mu_e$. Thus $f\mu_e\in (F(V)^\times/\mu_e)^{H_x}$, as desired. By \Cref{image-varphi}, the element $\gamma(N_{H/H_x}(a\otimes f))$ belongs to $\on{Im}(\varphi_{H_x})$. By \Cref{gener} applied to $B=A(-1)$, we conclude that the image of $\gamma$ is contained in $\smash{\overbar{H}}^2(H,A)$. This completes the proof.
	\end{proof}
	
	\begin{cor}\label{maincor}
		Under the assumptions of \Cref{thm-negligible}, let $H_a$ be the centralizer in $H$ of an element $a\in A$. Then
		$H^2(H, A)_{\on{neg},F}$ is generated by $\on{cor}_H^{H_a}(a\cup \chi)$ over all $a\in A$ and $\chi\in H^2(H_a,\Z)$.
	\end{cor}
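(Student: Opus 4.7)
The plan is to deduce \Cref{maincor} directly from \Cref{thm-negligible} by a transitivity-of-corestriction argument combined with the projection formula, with no new input. By \Cref{thm-negligible}, every element of $H^2(H,A)_{\on{neg},F}$ is a sum of elements of the form $\on{cor}^{H'}_H(a\cup\chi)$, where $H'$ ranges over subgroups of $H$, $a\in A^{H'}$, and $\chi\in H^2(H',\Z)$. The task is to rewrite each such element in the claimed form, i.e.\ with $H'$ replaced by the centralizer $H_a$ of a single element $a\in A$.

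The key observation is that the condition $a\in A^{H'}$ forces $H'\subset H_a$. I would therefore factor the corestriction as $\on{cor}^{H'}_H = \on{cor}^{H_a}_H\circ\on{cor}^{H'}_{H_a}$. Since $a\in A^{H_a}$, its restriction to $H'$ equals $a$, and the projection formula \cite[Proposition 1.5.3(iv)]{neukirch2008cohomology} applied to the pair $H'\subset H_a$ gives
\[
\on{cor}^{H'}_{H_a}(a\cup\chi)=\on{cor}^{H'}_{H_a}\bigl(\on{res}^{H'}_{H_a}(a)\cup\chi\bigr)=a\cup\on{cor}^{H'}_{H_a}(\chi).
\]
Setting $\chi'\coloneqq \on{cor}^{H'}_{H_a}(\chi)\in H^2(H_a,\Z)$, we obtain
\[
\on{cor}^{H'}_H(a\cup\chi)=\on{cor}^{H_a}_H(a\cup\chi'),
\]
which is of the desired form.

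Conversely, every element of the form $\on{cor}^{H_a}_H(a\cup\chi)$ with $a\in A$ and $\chi\in H^2(H_a,\Z)$ is of the form appearing in \Cref{thm-negligible} (with $H'=H_a$), and so lies in $H^2(H,A)_{\on{neg},F}$. This gives both containments and proves the corollary.

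There is no real obstacle: the entire argument is formal manipulation of corestriction and cup products, and the only subtlety to verify is that the projection formula applies to the pair $H'\subset H_a$ with $a$ viewed in degree zero—which is immediate since restriction of an invariant element is just the element itself.
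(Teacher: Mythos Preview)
Your proof is correct and follows essentially the same approach as the paper: both use that $a\in A^{H'}$ implies $H'\subset H_a$, factor the corestriction through $H_a$, and apply the projection formula to move the cup with $a$ outside $\on{cor}^{H'}_{H_a}$. The converse inclusion is handled identically by taking $H'=H_a$.
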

	
	\begin{proof}
		Let $M\subset H^2(H,A)$ be the subgroup generated by $\on{cor}_H^{H_a}(a\cup \chi)$ over all $a\in A$ and $\chi\in H^2(H_a,\Z)$. By \Cref{thm-negligible}, it suffices to show that $M=\smash{\overbar{H}}^2(H,A)$.
		
		We have $M\subset \smash{\overbar{H}}^2(H,A)$ by definition. Conversely, let $H'\subset H$ be a subgroup, let $a\in A^{H'}$, and let $\chi \in H^2(H',\Z)$. Then $H'\subset H_a$, and hence by the projection formula
		\[
		\on{cor}_H^{H'}(a\cup \chi)=\on{cor}_H^{H_a}\on{cor}_{H_a}^{H'}(a\cup \chi)=\on{cor}_H^{H_a}(a\cup \on{cor}_{H_a}^{H'}(\chi)).
		\]
		Thus $\smash{\overbar{H}}^2(H,A)\subset M$, and hence $\smash{\overbar{H}}^2(H,A)=M$.
	\end{proof}
	
	\begin{rmk}
		Let $F$ be a field. For all integers $m,n\geq 1$ such that $F$ contains a primitive root of unity of order $mn$, we call \emph{Kummer} the class in $H^2(\Z/m\Z,\Z/n\Z)$ of the central extension
		\[
		\begin{tikzcd}
			0\arrow[r] & \Z/n\Z \arrow[r] & \Z/mn\Z \arrow[r,"\pi"] & \Z/m\Z \arrow[r] & 0,
		\end{tikzcd}
		\]
		where $\pi$ is the reduction map. On $H^2(-,-)$, where the first entry is a finite group and the second entry is a finite module under the group, we can consider the following operations:
		\begin{enumerate}
			\item pushforward with respect to the module,
			\item pullback with respect to the group,
			\item corestriction with respect to a subgroup.
		\end{enumerate}
		Let $H$ be a finite group, and let $A$ be a finite $H$-module. By \Cref{kummer-theory} and \Cref{props}, the classes in $H^2(H,A)$ that can be obtained from Kummer classes by applying operations (1)-(3) a finite number of times, in any order, are negligible over $F$. It follows from the proof of \Cref{thm-negligible} that, if $F$ contains a primitive root of unity of order $e(A)e(H)$, then the classes in $H^2(H,A)$ obtained from Kummer classes using operations (1)-(3) form a generating set for $H^2(H,A)_{\on{neg}, F}$.
	\end{rmk}

	\section{Proof of Theorem \ref{thm-galois-rep}}\label{section-5}

	In view of \Cref{connection}, the following theorem gives a negative answer to \Cref{question} and disproves \Cref{florence-conj}.
	
	\begin{thm}\label{thm-gl3}
		For all $n\geq 3$, all primes $p\neq 2$, and all fields $F$ of characteristic different from $p$, the cohomology classes of the extensions
		\begin{equation}\label{gl-eq} 
			\begin{tikzcd}
				0 \arrow[r] & \mathfrak{gl}_n(\F_p) \arrow[r] & \on{GL}_n(\Z/p^2\Z) \arrow[r] & \on{GL}_n(\F_p) \arrow[r] & 1
			\end{tikzcd}
		\end{equation}
		and
		\begin{equation}\label{b-eq}
			\begin{tikzcd}
				0 \arrow[r] & \mathfrak{b}_n(\F_p) \arrow[r] & B_n(\Z/p^2\Z) \arrow[r] & B_n(\F_p) \arrow[r] & 1
		\end{tikzcd}\end{equation}
		are not negligible over $F$.
	\end{thm}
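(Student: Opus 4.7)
The plan is to prove \Cref{thm-gl3} by a chain of reductions that ends with applying \Cref{thm-negligible} to a restriction of $\alpha^{GL}_3$ to the Sylow $p$-subgroup $U_3(\F_p)\subset\on{GL}_3(\F_p)$, and then verifying directly that the resulting class fails to lie in the negligible subgroup described by \Cref{maincor}. The first stage is a sequence of routine reductions. For each $n\geq 3$, the block embedding $\iota_n\colon\on{GL}_3(\F_p)\hookrightarrow\on{GL}_n(\F_p)$, $M\mapsto M\oplus I_{n-3}$, admits a $\on{GL}_3(\F_p)$-equivariant projection $\pi\colon\mathfrak{gl}_n(\F_p)\to\mathfrak{gl}_3(\F_p)$ onto the top-left $3\times 3$ block, which is a retraction of the corresponding block inclusion $\mathfrak{gl}_3\hookrightarrow\mathfrak{gl}_n$. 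The induced commutative diagram of group extensions identifies $\alpha_3^{GL}$ with $\pi_*\iota_n^*(\alpha_n^{GL})$, so by parts (1) and (2) of \Cref{props}, negligibility of $\alpha_n^{GL}$ over $F$ forces negligibility of $\alpha_3^{GL}$; the contrapositive reduces the $\on{GL}$ case to $n=3$, and the same argument (using the $B_3(\F_p)$-equivariant decomposition $\mathfrak{b}_n=\mathfrak{b}_3\oplus\mathfrak{b}_{n-3}\oplus V_{12}$) handles the Borel case.

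Now assume $n=3$. The commutative diagram relating (\ref{gl-eq}) and (\ref{b-eq}) identifies $\alpha_3^{GL}|_{B_3(\F_p)}$ with the pushforward $\iota_*\alpha_3^{B}$ along the inclusion $\iota\colon\mathfrak{b}_3\hookrightarrow\mathfrak{gl}_3$, so by \Cref{props}(1) and (4) (using that $[\on{GL}_3(\F_p):B_3(\F_p)]$ is coprime to $p=e(\mathfrak{gl}_3(\F_p))$), negligibility of $\alpha_3^{B}$ would imply negligibility of $\alpha_3^{GL}$. Hence it suffices to prove the non-negligibility of $\alpha_3^{GL}$; the same conclusion for $\alpha_3^{B}$ then follows. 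A further application of \Cref{props}(4) to the Sylow $p$-subgroup $U_3(\F_p)\subset\on{GL}_3(\F_p)$ (also of index prime to $p$), together with \Cref{props}(5) to replace $F$ by $F(\mu_{p^2})$, reduces the theorem to showing that
\[
\alpha_3^{GL}|_{U_3(\F_p)}\in H^2(U_3(\F_p),\mathfrak{gl}_3(\F_p))
\]
is non-negligible over a field $F\supset\mu_{p^2}$. Since $p$ is odd, $U_3(\F_p)$ has exponent $p$, so $e(U_3(\F_p))\cdot e(\mathfrak{gl}_3(\F_p))=p^2$ and the hypothesis of \Cref{thm-negligible} is satisfied, giving via \Cref{maincor} an explicit generating set for the negligible subgroup.

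The main obstacle is the final step: showing that $\alpha_3^{GL}|_{U_3(\F_p)}$ is not contained in the subgroup generated by the classes $\on{cor}_{U_3(\F_p)}^{H_a}(a\cup\chi)$ of \Cref{maincor}. My plan is to construct, over a suitable field $K/F$ (taking $K$ to be a purely transcendental extension with sufficiently many independent Kummer elements), an explicit continuous homomorphism $\rho\colon\Gamma_K\to U_3(\F_p)$ whose pullback $\rho^*(\alpha_3^{GL}|_{U_3(\F_p)})\in H^2(K,\mathfrak{gl}_3(\F_p))$ is non-zero, so that $\rho$ fails to lift to $\Gamma_K\to\pi^{-1}(U_3(\F_p))\subset\on{GL}_3(\Z/p^2\Z)$. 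An explicit $2$-cocycle representative of $\alpha_3^{GL}|_{U_3(\F_p)}$ can be computed from canonical lifts of upper unitriangular matrices to $B_3(\Z/p^2\Z)$: the result combines the carry function $\delta$ of the Kummer extension $\Z/p^2\Z\to\F_p$ with the non-abelian Heisenberg commutator $[E_{12},E_{23}]=E_{13}$. For an appropriate $\rho$ built from Kummer characters $\chi_a,\chi_b\in H^1(K,\F_p)$ with $\chi_a\cup\chi_b=0$ (so that the lift $\rho$ to $U_3$ exists), the pullback of this cocycle has a genuine Massey-product-type contribution concentrated in the $E_{13}$-component that, by a direct verification against each type of generator $\on{cor}_{U_3(\F_p)}^{H_a}(a\cup\chi)$, cannot be realized inside the negligible subgroup. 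This yields the desired non-negligibility and completes the proof.
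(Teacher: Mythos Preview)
Your reductions are correct and essentially match the paper's: the passage from general $n$ to $n=3$ via block projection, the reduction of the Borel case to the $\on{GL}$ case via \Cref{props}(1) and (4), the restriction to the Sylow $p$-subgroup $U\coloneqq U_3(\F_p)$, and the enlargement of $F$ to contain $\mu_{p^2}$ are all exactly what the paper does.

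The genuine gap is in your final step. You are conflating two logically distinct strategies. The first is to exhibit an explicit $\rho\colon\Gamma_K\to U$ with $\rho^*(\alpha)\neq 0$: if you can do that, you are done \emph{by the definition of negligibility}, and neither \Cref{thm-negligible} nor \Cref{maincor} is needed at all. Your phrase ``by a direct verification against each type of generator $\on{cor}(a\cup\chi)$'' is then vacuous, because every negligible class automatically pulls back to zero under every $\rho$. The second strategy is to stay in group cohomology and prove that $\alpha|_U$ does not lie in the span of the generators of \Cref{maincor}; for that, constructing a Galois-theoretic $\rho$ is irrelevant. Your proposal does not actually carry out either strategy: for the first you give no argument that $\rho^*(\alpha)\neq 0$ in $H^2(K,\mathfrak{gl}_3(\F_p))$ (this is precisely the difficulty---over local and many global fields such $\rho$ always lift, so the choice of $K$ is delicate, and the ``Massey-type contribution in $E_{13}$'' is not made into an actual nonvanishing statement); for the second you give no mechanism for separating $\alpha$ from the infinitely many generators $\on{cor}^{H_a}_U(a\cup\chi)$.

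The paper takes the second route, and the tool you are missing is the action of the diagonal torus $T=T_3(\F_p)$. Since $\alpha$ arises by restriction from $B_3(\F_p)$, it is $T$-invariant. The paper fixes the normal abelian subgroup $N=\langle\sigma_{12},\sigma_{13}\rangle\subset U$ (also normalized by $T$) and proves two things: (i) $\on{res}^U_N(\alpha)$ is a nonzero element of $H^2(N,A)^T$ (for $p>3$ this follows because $\sigma_{13}$ does not lift to an element of order $p$; for $p=3$ one checks by hand that $N$ does not lift isomorphically); and (ii) the $T$-invariants of $\on{res}^U_N(\smash{\overbar{H}}^2(U,A))$ vanish. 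Point (ii) is what makes the comparison with the generators of \Cref{maincor} tractable: using the double coset formula one reduces $\on{res}^U_N\circ\varphi_{H'}$ to three $T$-equivariant maps with sources $A^U\otimes H^2(U,\Z)$, $A^N\otimes H^2(N,\Z)$, and $A^Z\otimes H^2(Z,\Z)$, and one computes the $T$-weights on these finite-dimensional $\F_p[T]$-modules to see that the relevant $T$-invariants map to zero. No field $K$ or Galois cohomology computation enters. Your proposal would benefit from adopting this $T$-equivariant viewpoint in place of the undetermined ``explicit $\rho$'' plan.
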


	In the rest of this section, we identify $\mathfrak{gl}_n(\F_p)$ with the $\F_p$-vector space $M_n(\F_p)$ of $n\times n$ matrices. Under this identification, the conjugation action of $\on{GL}_n(\Z/p^2\Z)$ on $\mathfrak{gl}_n(\F_p)$ induces the conjugation action of $\on{GL}_n(\F_p)$ on $M_n(\F_p)$, and the inclusion $\mathfrak{gl}_n(\F_p)\hookrightarrow\on{GL}_n(\Z/p^2\Z)$ of (\ref{gl-eq}) is given by $M\mapsto I+pM$. 
	
	\begin{lemma}\label{matrix-computation}
		Let $n\geq 1$ be an integer, let $p>3$ be a prime, and let $\sigma\in \on{GL}_n(\F_p)$ be an element of order $p$ and minimal polynomial $(t-1)^2$.
		
		(1) For every $M\in \mathfrak{gl}_n(\F_p)$, we have $N_\sigma(M)\coloneqq (1+\sigma+\dots+\sigma^{p-1})M=0$.
		
		(2) The element $\sigma$ does not lift to an element of order $p$ in $\on{GL}_n(\Z/p^2\Z)$.
	\end{lemma}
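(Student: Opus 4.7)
The plan is to exploit the minimal polynomial hypothesis by writing $\sigma = I + N$ with $N \in \mathfrak{gl}_n(\F_p)$ nonzero and $N^2 = 0$. This normalization is the key simplification: it immediately yields the closed form $\sigma^k = I + kN$ for every integer $k$, reducing both assertions to elementary computations that only track power sums in $\F_p$ and $p$-adic valuations of binomial coefficients.

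For part (1), the action of $\sigma^k$ on $M \in \mathfrak{gl}_n(\F_p)$ is by conjugation, so
\[
\sigma^k \cdot M = (I+kN)\, M\, (I-kN) = M + k(NM-MN) - k^2\, NMN,
\]
which is a polynomial of degree $2$ in $k$. Summing over $k=0,\dots,p-1$, the three power sums $\sum k^0 = p$, $\sum k = p(p-1)/2$, and $\sum k^2 = (p-1)p(2p-1)/6$ all vanish in $\F_p$; the last one is where the hypothesis $p>3$ enters, since it requires $6$ to be invertible mod $p$. This gives $N_\sigma(M)=0$.

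For part (2), suppose for contradiction that $\tilde\sigma = I + \tilde N \in \on{GL}_n(\Z/p^2\Z)$ is a lift of $\sigma$ of order $p$. Then $\tilde N$ reduces to $N$ mod $p$ and $\tilde N^2 \equiv 0 \pmod p$, hence $\tilde N^j$ is divisible by $p$ for all $j\geq 2$ and by $p^2$ for all $j\geq 4$. Expanding $(I+\tilde N)^p$ by the binomial theorem in $\Z/p^2\Z$, every middle term $\binom{p}{j}\tilde N^j$ with $2\leq j\leq p-1$ carries one factor of $p$ from the binomial coefficient and a second factor of $p$ from $\tilde N^j$, so it vanishes; and the extreme term $\tilde N^p$ vanishes because $p\geq 4$. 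Thus $\tilde\sigma^p \equiv I + p\tilde N \pmod{p^2}$, and the equation $\tilde\sigma^p = I$ forces $p\tilde N\equiv 0 \pmod{p^2}$, i.e., $N=0$, contradicting that $\sigma$ has minimal polynomial $(t-1)^2$.

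The one place requiring care is the bookkeeping in part (2): one must simultaneously exploit the divisibility of $\binom{p}{j}$ by $p$ and the divisibility of $\tilde N^j$ by $p$ to annihilate every intermediate term, and the $j=p$ term is the reason the hypothesis $p>3$ (giving $p\geq 5\geq 4$, so $\tilde N^p\equiv 0\pmod{p^2}$) is genuinely used here as well as in part (1). No step presents a serious difficulty.
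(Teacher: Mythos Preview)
Your proof is correct. Part (1) is essentially the paper's argument (the paper first conjugates to Jordan normal form, but that is unnecessary since $N \coloneqq \sigma - I$ already satisfies $N^2 = 0$ by the minimal-polynomial hypothesis). For part (2) you take a different route: the paper fixes one special lift $\tilde\sigma = I + \tilde N$ with $\tilde N^2 = 0$ \emph{exactly} in $M_n(\Z/p^2\Z)$, computes $\tilde\sigma^p = I + p\tilde N$ trivially, and then handles an arbitrary lift $\tau = (I+pa)\tilde\sigma$ by showing $\tau^p = \tilde\sigma^p + pN_\sigma(a) = \tilde\sigma^p$ via part (1). You instead treat an arbitrary lift directly, knowing only $\tilde N^2 \equiv 0 \pmod p$, and collapse the binomial expansion using the extra factor of $p$ in $\tilde N^j$ for $j\geq 2$ together with $p\geq 5$ to kill $\tilde N^p$. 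Your argument is more self-contained---it avoids both the Jordan-form normalization and the appeal to part (1)---while the paper's argument has the virtue of exhibiting (1) and (2) as two facets of the same computation.
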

	
	\begin{proof}
		(1) Up to conjugation, we may assume that $\sigma=I+N$, where $N$ is a nilpotent matrix in Jordan normal form with Jordan blocks of size $\leq 2$. Observe that $\sigma^i=(I+N)^i=I+iN$ for all $i\geq 0$. Therefore, for all $M\in \mathfrak{gl}_n(\F_p)$, each entry of $\sigma^i\cdot M = (I+iN)M(I-iN)$ is a polynomial function of $i$ with $\F_p$ coefficients and degree $\leq 2$. Since $p>3$, we have $1^j+\dots+p^j=0$ in $\F_p$ for $0\leq j\leq 2$. Thus
		$N_\sigma(M)=0$ in $\mathfrak{gl}_n(\F_p)$.
		
		(2) Up to conjugation, we may assume that $\sigma=I+N$ as in the proof of (1). Let $\tilde{\sigma}=I+\tilde{N}\in \on{GL}_n(\Z/p^2\Z)$, where $\tilde{N}$ is the only nilpotent matrix in Jordan form which reduces to $N$ modulo $p$. Then $\tilde{\sigma}$ is a lift of $\sigma$ and $\tilde{\sigma}^p=I+p\tilde{N}$, so that in particular $\tilde{\sigma}^p\neq 1$. If $\tau\in \on{GL}_n(\Z/p^2\Z)$ is another lift of $\sigma$, then $\tau=\tilde{a}\tilde{\sigma}$, where $\tilde{a}=I+pa$ for some $a\in \mathfrak{gl}_n(\F_p)$. 
		It follows from (1) that 
		\[\tau^p=(\tilde{a}\tilde{\sigma})^p=\tilde{\sigma}^p+pN_\sigma(a)=\tilde{\sigma}^p\neq 1.\qedhere\]
	\end{proof}

	\begin{lemma}\label{reduce-n-3}
		Let $F$ be a field, let $\tilde{V}$ and $\tilde{W}$ be free $\Z/p^2\Z$-modules of finite rank, and let $V$ and $W$ be their reductions modulo $p$. If the class of
		\[
		\begin{tikzcd}
			0 \arrow[r] &  \mathfrak{gl}(V\oplus W) \arrow[r] & \on{GL}(\tilde{V}\oplus \tilde{W}) \arrow[r] & \on{GL}(V\oplus W)\arrow[r] &  1
		\end{tikzcd}
		\]
		is negligible over $F$, then so is the class of
		\[
		\begin{tikzcd}
			0 \arrow[r] &  \mathfrak{gl}(V) \arrow[r] & \on{GL}(\tilde{V}) \arrow[r] & \on{GL}(V)\arrow[r] &  1.
		\end{tikzcd}
		\]
	\end{lemma}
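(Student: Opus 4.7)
The strategy is to exhibit a map of short exact sequences from the small extension to the big one, then split off the kernel inclusion with an equivariant retraction; negligibility is then preserved by the functoriality properties in \Cref{props}(1)--(2).

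First I would introduce the group embedding $i\colon \on{GL}(V) \hookrightarrow \on{GL}(V\oplus W)$, $g\mapsto g\oplus \on{id}_W$, and its natural lift $\tilde\imath\colon \on{GL}(\tilde V)\hookrightarrow \on{GL}(\tilde V\oplus \tilde W)$, $\tilde g\mapsto \tilde g\oplus \on{id}_{\tilde W}$. Together with the $\F_p$-linear map $j\colon \mathfrak{gl}(V)\hookrightarrow \mathfrak{gl}(V\oplus W)$ sending a matrix $A$ to the block matrix $A\oplus 0$, these fit into a commutative diagram with exact rows
\[
\begin{tikzcd}
0 \arrow[r] & \mathfrak{gl}(V) \arrow[r]\arrow[d,"j"] & \on{GL}(\tilde V) \arrow[r]\arrow[d,"\tilde\imath"] & \on{GL}(V) \arrow[r]\arrow[d,"i"] & 1 \\
0 \arrow[r] & \mathfrak{gl}(V\oplus W) \arrow[r] & \on{GL}(\tilde V\oplus \tilde W) \arrow[r] & \on{GL}(V\oplus W) \arrow[r] & 1.
\end{tikzcd}
\]
A direct block computation with $i(g)=\bigl(\begin{smallmatrix} g & 0\\ 0 & I\end{smallmatrix}\bigr)$ shows that $j$ is $\on{GL}(V)$-equivariant for the action on $\mathfrak{gl}(V\oplus W)$ induced by $i$.

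Let $\alpha\in H^2(\on{GL}(V\oplus W),\mathfrak{gl}(V\oplus W))$ and $\beta\in H^2(\on{GL}(V),\mathfrak{gl}(V))$ denote the classes of the two extensions. The commutative diagram of extensions above gives the identity $i^*(\alpha)=j_*(\beta)$ in $H^2(\on{GL}(V),\mathfrak{gl}(V\oplus W))$, where $\on{GL}(V)$ acts on $\mathfrak{gl}(V\oplus W)$ via $i$. Now consider the projection $\pi\colon \mathfrak{gl}(V\oplus W)\twoheadrightarrow \mathfrak{gl}(V)$ onto the upper-left block; another routine block calculation shows that $\pi$ is $\on{GL}(V)$-equivariant, and by construction $\pi\circ j=\on{id}_{\mathfrak{gl}(V)}$. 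Consequently
\[
\pi_*\,i^*(\alpha)\;=\;\pi_*\,j_*(\beta)\;=\;(\pi\circ j)_*(\beta)\;=\;\beta.
\]

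Finally I would invoke \Cref{props}: if $\alpha$ is negligible over $F$, then by part (2) so is $i^*(\alpha)$, and by part (1) so is its pushforward $\beta=\pi_*\,i^*(\alpha)$. This completes the argument. There is no real obstacle here---the key structural point is that $\mathfrak{gl}(V)$ is a direct summand of $\mathfrak{gl}(V\oplus W)$ as an $\on{GL}(V)$-module (via $j$ and $\pi$), which allows us to literally split $\beta$ off from the class $i^*(\alpha)$ obtained by restriction.
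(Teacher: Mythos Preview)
Your proof is correct and follows essentially the same approach as the paper: both pull back the big extension along the block-diagonal inclusion $\on{GL}(V)\hookrightarrow\on{GL}(V\oplus W)$ and then push forward along the $\on{GL}(V)$-equivariant projection $\mathfrak{gl}(V\oplus W)\to\mathfrak{gl}(V)$, invoking \Cref{props}(1)--(2). The paper phrases the middle step by explicitly forming the preimage $E\subset\on{GL}(\tilde V\oplus\tilde W)$ of $\on{GL}(V)$ and the group homomorphism $E\to\on{GL}(\tilde V)$, whereas you work directly with the cohomological identity $i^*(\alpha)=j_*(\beta)$ and the retraction $\pi\circ j=\on{id}$; these are two presentations of the same argument.
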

	
	\begin{proof}
		This follows from \cite[Lemma 3.4]{declercq2017lifting}. We include an alternative argument. Let $E\subset \on{GL}(\tilde{V}\oplus\tilde{W})$ be the inverse image of $\on{GL}(V)\subset \on{GL}(V\oplus W)$. The restriction to $\tilde{V}$ of any element of $E$ belongs to $\on{GL}(\tilde{V})$. It follows that restriction to $\tilde{V}$ defines a group homomorphism $\pi\colon E\to \on{GL}(\tilde{V})$. We obtain the following commutative diagram with exact rows:
		\[
		\begin{tikzcd}
			0 \arrow[r] &  \mathfrak{gl}(V\oplus W) \arrow[r] & \on{GL}(\tilde{V}\oplus \tilde{W}) \arrow[r] & \on{GL}(V\oplus W)\arrow[r] &  1 \\
			0\arrow[r] &  \mathfrak{gl}(V\oplus W) \arrow[r] \arrow[d] \arrow[u,equal] & E \arrow[r] \arrow[d,"\pi"] \arrow[u,hook] & \on{GL}(V) \arrow[r]\arrow[d,equal] \arrow[u,hook] & 1 \\
			0\arrow[r] &  \mathfrak{gl}(V) \arrow[r] & \on{GL}(\tilde{V}) \arrow[r] & \on{GL}(V) \arrow[r] & 1.
		\end{tikzcd}
		\]
		As the top row of this diagram is negligible over $F$ by assumption, by \Cref{props}(2) so is the middle row, and hence by \Cref{props}(1) so is the bottom row.
	\end{proof}
	
	\begin{proof}[Proof of \Cref{thm-gl3}]
		Suppose that the class of (\ref{b-eq}) is negligible over $F$, for some $n\geq 3$. Then, by \Cref{props}(1), the class of the restriction of (\ref{b-eq}) to the group $H^2(B_n(\F_p),\mathfrak{gl}_n(\F_p))$ is also negligible over $F$. This class is the restriction of the class of (\ref{gl-eq}) to $B_n(\F_p)$. Since $p$ does not divide $[\on{GL}_n(\F_p): B_n(\F_p)]$, by \Cref{props}(4) we conclude that the class of (\ref{gl-eq}) is also negligible over $F$. This reduces the proof of \Cref{thm-gl3} to proving that (\ref{gl-eq}) is not negligible over $F$, for all $n\geq 3$. By \Cref{reduce-n-3}, we are further reduced to the case $n=3$.
		
		We let $A\coloneqq \mathfrak{gl}_3(\F_p)$, $B\coloneqq B_3(\F_p)$, $U\coloneqq U_3(\F_p)$, $T\coloneqq T_3(\F_p)$, where $T_3\subset \on{GL}_3$ is the diagonal torus, and we let $\alpha\in H^2(U,A)$ be the class of the restriction of (\ref{gl-eq}) to $U$. Concretely, $\alpha$ represents the extension
		\begin{equation}\label{restriction-alpha}
			\begin{tikzcd}
				1\arrow[r] & A \arrow[r] & \tilde{U} \arrow[r] & U \arrow[r] & 1,
			\end{tikzcd}
		\end{equation}
		where $\tilde{U}\subset \on{GL}_3(\Z/p^2\Z)$ is the inverse image of $U$.
		Note that $\alpha$ belongs to the image of the restriction $H^2(B,A)\to H^2(U,A)$, and so it is $T$-invariant; see \cite[Proposition 3 p. 99]{cassels1967algebraic}. In order to complete the proof, by \Cref{props}(2) it is enough to prove that $\alpha$ is not negligible over $F$. Since $\on{char}(F)\neq p$, by \Cref{props}(5) we may assume that $F$ contains a primitive root of unity of order $p^2$. We have $e(A)=p$ and, as $p$ is odd, $e(U)=p$. We are now in the setting of \Cref{thm-negligible}, and hence $H^2(U,A)_{\on{neg},F}=\smash{\overbar{H}}^2(U,A)$. It remains to show that $\alpha$ does not belong to $\smash{\overbar{H}}^2(U,A)$.
		
		We introduce some notation that will be used throughout the remainder of the proof. For each $i,j\in\{1,2,3\}$, let $E_{ij}\in A$ be the matrix with a $1$ in the $(i,j)$-th position and $0$ everywhere else, let $\tau_{ij}\colon T\to \F_p^\times$ be the multiplicative character  which sends $(t_1,t_2,t_3)$ to $t_i/t_j$, and let $\F_p(\tau_{ij})$ be the $1$-dimensional representation of $T$ given by multiplication by $\tau_{ij}$. As a $T$-representation, $A$ decomposes as the direct sum of the one-dimensional subrepresentations spanned by the $E_{ij}$, and these are isomorphic to $\F_p(\tau_{ij})$. For every $1\leq i<j\leq 3$, let $\sigma_{ij}\in U$ be the matrix with $1$ on the diagonal and on the $(i,j)$-th entry, and $0$ everywhere else. We also define $N\coloneqq \ang{\sigma_{12},\sigma_{13}}\subset U$ and $Z\coloneqq \ang{\sigma_{13}}\subset U$. Observe that $N$ is normal in $U$, that $Z$ is the center of $U$, and that $N$ and $Z$ are normalized by $T$.
		
		To prove that $\alpha$ does not belong to $\smash{\overbar{H}}^2(U,A)$, we will first show that $\alpha$ restricts to a non-zero element of $H^2(N,A)^T$ (\Cref{claim-1}), and then prove that $\on{res}^U_N(\smash{\overbar{H}}^2(U,A))^T=0$ (\Cref{claim-4}).
		
		\begin{claim}\label{claim-1}
			The class $\alpha$ restricts to a non-zero element of $H^2(N,A)^T$.
		\end{claim}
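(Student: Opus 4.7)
The plan is to identify $\on{res}^U_N(\alpha) \in H^2(N,A)$ with the class of the group extension
\[
1 \to A \to \tilde{N} \to N \to 1,
\]
where $\tilde{N}\subset \tilde{U}$ is the preimage of $N$, obtained by pulling back (\ref{restriction-alpha}) along the inclusion $N\hookrightarrow U$. With this identification in hand, I would separately establish the $T$-invariance and the non-vanishing of this class.

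The $T$-invariance is essentially immediate from what has already been noted. As observed just before the claim, $\alpha \in H^2(U,A)^T$ because $\alpha$ is the restriction of a class from $H^2(B,A)$. Since $N$ is stable under the conjugation action of $T$ (being even normal in $U$), the restriction map $\on{res}^U_N \colon H^2(U,A) \to H^2(N,A)$ is $T$-equivariant, and hence $\on{res}^U_N(\alpha) \in H^2(N,A)^T$.

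For the non-vanishing, I would argue by contradiction. If $\on{res}^U_N(\alpha) = 0$, then the extension $1 \to A \to \tilde{N} \to N \to 1$ splits as an extension of groups, so there exists a group-theoretic section $s \colon N \to \tilde{N}$. The image $s(\sigma_{12}) \in \tilde{U} \subset \on{GL}_3(\Z/p^2\Z)$ is then an element of order $p$ lifting $\sigma_{12}$. But $\sigma_{12} = I + E_{12}$ satisfies $(\sigma_{12}-I)^2 = 0$ with $\sigma_{12}-I \neq 0$, so it has minimal polynomial $(t-1)^2$. \Cref{matrix-computation}(2) then rules out the existence of such a lift, giving the desired contradiction.

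The substantive work underlying the claim is the non-lifting statement \Cref{matrix-computation}(2), which has already been established; the present claim is essentially a short deduction from it, together with the bookkeeping that identifies the restricted class with the pulled-back extension and tracks $T$-invariance through the restriction map.
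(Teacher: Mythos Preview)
Your argument for $T$-invariance is fine and matches the paper. Your non-vanishing argument is correct for $p>3$, and in that range it is essentially the paper's argument (the paper uses $\sigma_{13}$ rather than $\sigma_{12}$, restricting all the way down to $Z$, but this is immaterial).

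However, there is a genuine gap at $p=3$. \Cref{matrix-computation} explicitly assumes $p>3$, so you cannot invoke it when $p=3$. This is not just a citation problem: the statement you need is actually \emph{false} at $p=3$. Indeed, for $p=3$ one computes $N_{\sigma_{12}}(a)=E_{12}aE_{12}$, which is the matrix with $(1,2)$-entry $a_{21}$ and zeros elsewhere; taking $a$ with $a_{21}=-1$ gives $N_{\sigma_{12}}(a)=-E_{12}$, and then $\tau\coloneqq (I+3a)(I+E_{12})\in\on{GL}_3(\Z/9\Z)$ is a lift of $\sigma_{12}$ of order $3$. (The same holds for $\sigma_{13}$; the paper notes in a later remark that $\on{res}^U_Z(\alpha)=0$ when $p=3$.) So the existence of a splitting over $N$ cannot be ruled out by looking at a single generator.

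What the paper does for $p=3$ is to use both generators of $N$ at once: it shows that one cannot find lifts $\sigma,\tau\in\on{GL}_3(\Z/9\Z)$ of $\sigma_{12},\sigma_{13}$ satisfying $\sigma^3=\tau^3=[\sigma,\tau]=1$ simultaneously. The three conditions translate into $N_{\sigma_{12}}(a)=-E_{12}$, $N_{\sigma_{13}}(b)=-E_{13}$, and $(\sigma_{13}-1)a=(\sigma_{12}-1)b$, and a short matrix computation shows these force $a_{21}=-1$ and $a_{21}=0$ at once, a contradiction. You need to supply this (or an equivalent) argument to close the $p=3$ case.
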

		
		\begin{proof}
			Since $T$ normalizes $U$ and $N$, the inclusion $N\hookrightarrow U$ is $T$-equivariant. As $\alpha$ is $T$-invariant, by \cite[Proposition 3 p. 99]{cassels1967algebraic} the class $\on{res}^U_N(\alpha)$ is also $T$-invariant. It remains to show that $\on{res}^U_N(\alpha)$ is non-trivial. 
			
			Suppose first that $p>3$. In this case, by \Cref{matrix-computation} the element $\sigma_{13}\in U$ does not lift to an element of order $p$ in $\on{GL}_3(\Z/p^2\Z)$. Thus $\on{res}^U_Z(\alpha)$ is non-trivial extension, and hence in particular $\on{res}^U_N(\alpha)$ is non-trivial.
			
			Suppose now that $p=3$. If $\on{res}^U_N(\alpha)=0$ in $H^2(N,A)$, then $N$ should lift to an isomorphic subgroup of $\on{GL}_3(\Z/9\Z)$, that is, there should exist two elements $\sigma,\tau\in \on{GL}_3(\Z/9)$ such that $\sigma^3=\tau^3=[\sigma,\tau]=1$, $\sigma$ reduces to $\sigma_{12}$ modulo $3$, and $\tau$ reduces to $\sigma_{13}$ modulo $3$. We now prove that such $\sigma$ and $\tau$ do not exist. Indeed, let $\tilde{\sigma}_{12} = I+E_{12}$ and $\tilde{\sigma}_{13} = I+E_{13}$ in $GL_3(\Z/9\Z)$. We have
			$(\tilde{\sigma}_{12})^3 =I+3E_{12}$, $(\tilde{\sigma}_{13})^3 = I+3E_{13}$ and $[\tilde{\sigma}_{12},\tilde{\sigma}_{13}]=1$.
			All the possible $\sigma$ (resp. $\tau$) are of the form $\tilde{a}\tilde{\sigma}_{12}$ (resp. $\tilde{b}\tilde{\sigma}_{13}$), for some $a,b\in A$, where $\tilde{a}=I+3a$ and $\tilde{b}=I+3b$. Define $\sigma\coloneqq \tilde{a}\tilde{\sigma}_{12}$ and $\tau\coloneqq \tilde{b}\tilde{\sigma}_{13}$. The conditions $\sigma^3=\tau^3=[\sigma,\tau]=1$ are equivalent to $N_{\sigma_{12}}(a) = -E_{12}$, $N_{\sigma_{13}}(b) = -E_{13}$
			and $(\sigma_{13}-1)a = (\sigma_{12} - 1)b$. Here we write the $U$-action on $A$ in additive notation, and we let $N_{\sigma_{ij}}\coloneqq 1+\sigma_{ij}+\sigma_{ij}^2$. Letting $a=(a_{ij})$, a matrix computation shows 
			\[N_{\sigma_{12}}(a)=
			\begin{pmatrix}
				0 & a_{21} & 0 \\
				0 & 0 & 0 \\
				0 & 0 & 0
			\end{pmatrix},\qquad
			-(\sigma_{13}-1)a+(\sigma_{12}-1)b=\begin{pmatrix}
				* & * & * \\
				* & * & a_{21} \\
				* & * & *
			\end{pmatrix}.
			\]
			Thus $-1=a_{21}=0$, a contradiction.
			It follows that no such $\sigma$ and $\tau$ exist, and hence $\on{res}^U_N(\alpha)$ is non-trivial when $p=3$ as well.
		\end{proof}
		
		\begin{claim}\label{claim-2}
			Let $H_1\subset H_2\subset U$ be subgroups of $U$, such that $|H_i|=p^i$ for $i=1,2$. Then $\on{Im}(\varphi_{H_1})\subset \on{Im}(\varphi_{H_2})$.
		\end{claim}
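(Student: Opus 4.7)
The plan is to reduce the comparison of $\on{Im}(\varphi_{H_1})$ and $\on{Im}(\varphi_{H_2})$ to a computation inside $H^2(H_2,A)$ via transitivity of corestriction, and then to apply the projection formula. First I would observe that, since $p$ is odd and $U=U_3(\F_p)$, the group $U$ has exponent $p$; hence $H_2$, being abelian of order $p^2$ and exponent dividing $p$, is elementary abelian of rank two. In particular $H_1$ is a direct factor of $H_2$: we may choose a subgroup $C\subset H_2$ of order $p$ with $H_2=H_1\times C$, and fix a generator $g$ of $C$.

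Next, by transitivity of corestriction $\on{cor}^{H_1}_U=\on{cor}^{H_2}_U\circ \on{cor}^{H_1}_{H_2}$, the map $\varphi_{H_1}$ factors as the cup-product followed by $\on{cor}^{H_1}_{H_2}$ and then $\on{cor}^{H_2}_U$. It therefore suffices to show that for all $a\in A^{H_1}$ and $c\in H^2(H_1,\Z)$, the class $\on{cor}^{H_1}_{H_2}(a\cup c)\in H^2(H_2,A)$ lies in the image of the cup-product map $A^{H_2}\otimes H^2(H_2,\Z)\to H^2(H_2,A)$.

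For this I would identify $H^2(G,\Z)=\on{Hom}(G,\Q/\Z)$ via the Bockstein for $G\in\{H_1,H_2\}$. Under this identification the restriction $\on{res}\colon H^2(H_2,\Z)\to H^2(H_1,\Z)$ becomes restriction of characters along the inclusion of the direct summand $H_1\hookrightarrow H_2$, and is therefore surjective. Choose a lift $\tilde{c}\in H^2(H_2,\Z)$ with $\on{res}(\tilde{c})=c$. Then the projection formula \cite[Proposition 1.5.3(iv)]{neukirch2008cohomology}, combined with the graded commutativity of $\cup$ in bidegree $(0,2)$, yields
\[
\on{cor}^{H_1}_{H_2}(a\cup c)=\on{cor}^{H_1}_{H_2}\bigl(a\cup \on{res}(\tilde{c})\bigr)=\on{cor}^{H_1}_{H_2}(a)\cup\tilde{c}.
\]
On $H^0$ the corestriction is the norm, so $\on{cor}^{H_1}_{H_2}(a)=\sum_{i=0}^{p-1} g^i\cdot a=N_C(a)$, which belongs to $A^{H_2}$: it is $C$-invariant tautologically, and $H_1$-invariant because $a$ is and $H_1$ commutes with $C$. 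This exhibits $\on{cor}^{H_1}_{H_2}(a\cup c)$ as an element of $A^{H_2}\cup H^2(H_2,\Z)$, and applying $\on{cor}^{H_2}_U$ gives $\varphi_{H_1}(a\otimes c)\in\on{Im}(\varphi_{H_2})$.

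The argument is essentially a direct application of Frobenius reciprocity, and I do not anticipate a real obstacle. The only delicate point is the initial structural observation that $H_2$ must be elementary abelian (crucially using $p\neq 2$), as this simultaneously provides the splitting $H_2=H_1\times C$ and the surjectivity of $\on{res}\colon H^2(H_2,\Z)\to H^2(H_1,\Z)$ that enables the projection-formula step.
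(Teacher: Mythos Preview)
Your proposal is correct and follows essentially the same approach as the paper: both arguments observe that $e(U)=p$ forces $H_2\cong(\Z/p\Z)^2$ so that $H_1$ is a direct summand, lift a class in $H^2(H_1,\Z)$ to $H^2(H_2,\Z)$ via this splitting, and then apply the projection formula to identify $\on{cor}^{H_1}_{H_2}(a\cup c)$ with $N_{H_2/H_1}(a)\cup\tilde{c}$. The only cosmetic difference is that the paper phrases the projection-formula step as a commutative square with vertical map $N_{H_2/H_1}$, whereas you unwind it explicitly via the decomposition $H_2=H_1\times C$ and the norm $N_C$.
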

		
		\begin{proof}
			For $i=1,2$, we write $\partial\colon H^1(H_i,\Q/\Z)\to H^2(H_i,\Z)$ for the connecting map associated to the short exact sequence $0\to\Z\to\Q\to\Q/\Z\to 0$.
			
			Let $\chi_1\in H^1(H_1,\Q/\Z)$ be a generator. As $e(U)=p$, we have $H_2\cong (\Z/p\Z)^2$, and hence $H_1$ is a direct factor of $H_2$. It follows that there exists $\chi_2\in H^1(H_2,\Q/\Z)$ such that $\chi_1$ is the restriction of $\chi_2$. Thus $\partial(\chi_1)$ is the restriction of $\partial(\chi_2)$, and so by the projection formula \cite[Proposition 1.5.3 (iv)]{neukirch2008cohomology}, the square
			\[
			\begin{tikzcd}
				A^{H_1} \arrow[d,"N_{H_2/H_1}"] \arrow[r,"\cup\partial(\chi_1)"] & H^2(H_1,A) \arrow[d,"\on{cor}^{H_1}_{H_2}"]  \\
				A^{H_2} \arrow[r,"\cup \partial(\chi_2)"] & H^2(H_2,A)
			\end{tikzcd}
			\]
			commutes. We deduce that $\varphi_{H_1}(a\otimes\partial(\chi_1))=\varphi_{H_2}(N_{H_2/H_1}(a)\otimes\partial(\chi_2))$ for all $a\in A^{H_1}$, from which the claim follows.
		\end{proof}
		
		Consider the map
		\[\psi_Z\colon A^Z\otimes H^2(Z,\Z)\xrightarrow{\cup} H^2(Z,A)\xrightarrow{\on{cor}}H^2(N,A).\]
		
		\begin{claim}\label{claim-3}
			The subgroup $\on{res}^U_N(\smash{\overbar{H}}^2(U,A))\subset H^2(N,A)$ is contained in the subgroup generated by the images of $\on{res}^U_N\circ\varphi_U$, $\on{res}^U_N\circ\varphi_N$, and $\psi_Z$.
		\end{claim}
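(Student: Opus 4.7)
The plan is to reduce the claim to a Mackey-style double coset computation by exploiting the fact that $U = U_3(\F_p)$ has order $p^3$ and nilpotency class $2$. First I would record the subgroup structure: since $[U,U] = Z$, every subgroup of order $p^2$ is normal in $U$ and contains $Z$, and there are exactly $p+1$ such subgroups, namely $N$ and $p$ others. By definition $\smash{\overbar{H}}^2(U,A)$ is generated by the images $\on{Im}(\varphi_{H'})$ as $H'$ runs over \emph{all} subgroups of $U$, so it suffices to check that for each subgroup $H' \subset U$, the composite $\on{res}^U_N \circ \varphi_{H'}$ has image in the subgroup generated by $\on{res}^U_N \circ \varphi_U$, $\on{res}^U_N \circ \varphi_N$, and $\psi_Z$.

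Next I would dispose of the easy cases. For $H' = \{1\}$ there is nothing to prove since $H^2(\{1\},\Z) = 0$. The cases $H' = U$ and $H' = N$ produce two of the three named generators directly. For $H'$ of order $p$, I would invoke \Cref{claim-2}: since $U$ is a $p$-group of class $2$, any order-$p$ subgroup is contained in some order-$p^2$ subgroup $H_2$, and then $\on{Im}(\varphi_{H'}) \subset \on{Im}(\varphi_{H_2})$, so this case reduces to subgroups of order $p^2$.

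The essential step is to handle a subgroup $H'$ of order $p^2$ with $H' \neq N$. Since $H'$ and $N$ both contain $Z$ and each has index $p$ in $U$, we have $H' \cap N = Z$ and $H' \cdot N = U$, so the double coset space $N \backslash U / H'$ consists of a single point. The Mackey formula therefore gives
\[\on{res}^U_N \circ \on{cor}^{H'}_U \;=\; \on{cor}^Z_N \circ \on{res}^{H'}_Z\]
as maps $H^2(H',A) \to H^2(N,A)$. Combining this with the projection formula for the cup product, for $a \in A^{H'}$ and $c \in H^2(H',\Z)$ we get
\[\on{res}^U_N\bigl(\varphi_{H'}(a \otimes c)\bigr) \;=\; \on{cor}^Z_N\bigl(a \cup \on{res}^{H'}_Z(c)\bigr) \;=\; \psi_Z\bigl(a \otimes \on{res}^{H'}_Z(c)\bigr),\]
where the last equality uses $A^{H'} \subset A^Z$. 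This lies in $\on{Im}(\psi_Z)$, completing the proof.

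The main obstacle is essentially bookkeeping: one must correctly enumerate the index-$p$ subgroups of $U$ and verify the crucial intersection identity $H' \cap N = Z$ for $H' \neq N$. Once that is in place, the Mackey decomposition collapses to a single term and the cup product manipulation makes the image of $\psi_Z$ appear on the nose.
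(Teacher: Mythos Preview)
Your proof is correct and follows essentially the same route as the paper's: reduce to subgroups $H'$ of each possible order, use \Cref{claim-2} to push order-$p$ subgroups up to order $p^2$, and for an order-$p^2$ subgroup $H' \neq N$ use $U = NH'$ and $N \cap H' = Z$ together with the Mackey double coset formula (which the paper packages as a commutative square citing \cite[Proposition 1.5.3(iii) and Corollary 1.5.8]{neukirch2008cohomology}) to land in $\on{Im}(\psi_Z)$. The only cosmetic difference is that the paper splits the order-$p$ case according to whether the containing order-$p^2$ subgroup equals $N$ or not, whereas you fold this into the previous case; the content is identical.
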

		
		\begin{proof}
			By definition, $\smash{\overbar{H}}^2(N,A)$ is generated by the images of all the $\varphi_{H}$, where $H$ is a subgroup of $U$.
			
			Let $H\subset U$ be a subgroup of order $p^2$ such that $H\neq N$. Then $U=NH$ and $N\cap H=Z$, and hence by \cite[Proposition 1.5.3(iii) and Corollary 1.5.8]{neukirch2008cohomology} we have a commutative diagram
			\[
			\begin{tikzcd}
				A^{H}\otimes H^2(H,\Z) \arrow[r,"\cup"] \arrow[d,"\on{res}\otimes\on{res}"]  &  H^2(H,A) \arrow[r,"\on{cor}"] \arrow[d,"\on{res}"] & H^2(U,A) \arrow[d,"\on{res}"] \\
				A^Z\otimes H^2(Z,\Z)  \arrow[r,"\cup"] & H^2(Z,A) \arrow[r,"\on{cor}"] & H^2(N,A).
			\end{tikzcd}
			\]
			The composite of the maps of the top (resp. bottom) row is $\varphi_{H}$ (resp. $\psi_Z$). It follows that $\on{Im}(\on{res}^U_N\circ \varphi_{H})\subset \on{Im}(\psi_Z)$.
			
			If $H\subset U$ has order $p$, then it is contained in a subgroup $\tilde{H}\subset U$ of order $p^2$, and hence by \Cref{claim-2} we have $\on{Im}(\varphi_{H})\subset \on{Im}(\varphi_{\tilde{H}})$. If $\tilde{H}=N$, then $\on{Im}(\varphi_{H})\subset \on{Im}(\varphi_{N})$, which implies $\on{Im}(\on{res}^U_N\circ \varphi_{H})\subset \on{Im}(\on{res}^U_N\circ \varphi_{N})$. On the other hand, if $\tilde{H}\neq N$, then $\on{Im}(\on{res}^U_N\circ \varphi_{H})\subset \on{Im}(\psi_Z)$ by the case of subgroups of order $p^2$ considered above.
		\end{proof}
		
		\begin{claim}\label{claim-4}
			The subgroup $\on{res}^U_N(\smash{\overbar{H}}^2(U,A))^T\subset H^2(N,A)$ is trivial.
		\end{claim}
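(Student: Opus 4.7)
The strategy is to exploit the $T$-action together with \Cref{claim-3}. Since $|T| = (p-1)^3$ is coprime to $p$, the group algebra $\F_p[T]$ is semisimple: every $\F_p[T]$-module decomposes canonically as a direct sum of weight spaces, taking $T$-invariants is exact, and for any $T$-equivariant map $f \colon M \to M'$ of $\F_p[T]$-modules one has $(\on{Im} f)^T = f(M^T)$. By \Cref{claim-3} it suffices to show that the image of each of the $T$-equivariant maps $\on{res}^U_N \circ \varphi_U$, $\on{res}^U_N \circ \varphi_N$, and $\psi_Z$ has trivial $T$-invariants; by the observation just made, it is enough to compute each map on the weight-$0$ subspace of its domain.

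The preliminary step is to record the $T$-weight decompositions of the relevant modules. A short computation using commutation with the $\sigma_{ij}$ yields $A^U = \F_p\cdot I \oplus \F_p\cdot E_{13}$ with weights $\{0, \tau_{13}\}$, $A^N$ with weights $\{0, \tau_{12}, \tau_{13}\}$, and $A^Z$ with weights $\{0, 0, \tau_{12}, \tau_{13}, \tau_{23}\}$. Via the connecting isomorphism $\partial\colon H^1(-, \Q/\Z) \xrightarrow{\sim} H^2(-, \Z)$, the groups $H^2(U,\Z)$, $H^2(N,\Z)$, and $H^2(Z,\Z)$ acquire weights dual to those of the abelianizations, namely $\{\tau_{12}^{-1}, \tau_{23}^{-1}\}$, $\{\tau_{12}^{-1}, \tau_{13}^{-1}\}$, and $\{\tau_{13}^{-1}\}$ respectively.

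With these decompositions in hand the cases of $\on{res}^U_N \circ \varphi_U$ and $\psi_Z$ are direct. For the first, the four weights of $A^U \otimes H^2(U,\Z)$ are $\{\tau_{12}^{-1}, \tau_{23}^{-1}, \tau_{23}, \tau_{12}\}$, all nontrivial characters of $T$, so the domain has no weight-$0$ component. For the second, the weight-$0$ part of $A^Z \otimes H^2(Z,\Z)$ is spanned by $E_{13} \otimes \partial(\sigma_{13}^*)$; since $E_{13} \in A^U \subset A^N$, the projection formula together with the compatibility of corestriction with $\partial$ gives $\psi_Z(E_{13} \otimes \partial(\sigma_{13}^*)) = E_{13} \cup \partial(\on{cor}^Z_N(\sigma_{13}^*))$. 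The corestriction on $H^1(-,\Q/\Z)$ is pullback along the Verlagerung $V\colon N \to Z$, which for the abelian group $N$ of exponent $p$ with subgroup $Z$ of index $p$ sends $n$ to $n^p = 1$; hence this contribution vanishes as well.

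The decisive case is $\on{res}^U_N \circ \varphi_N$, where the weight-$0$ subspace of the domain is two-dimensional, spanned by $E_{12}\otimes \partial(\sigma_{12}^*)$ and $E_{13}\otimes \partial(\sigma_{13}^*)$. Using the normal-subgroup identity $\on{res}^U_N \circ \on{cor}^N_U = N_{U/N} = \sum_{i=0}^{p-1} (\sigma_{23}^i)^*$ on $H^2(N, A)$ and the naturality of the cup product, I would extract the $\sigma_{23}$-conjugation action from the commutator relation $[\sigma_{12}, \sigma_{23}] = \sigma_{13}$: this gives $\sigma_{23}\cdot E_{12} = E_{12} - E_{13}$, $\sigma_{23}\cdot E_{13} = E_{13}$, $\sigma_{23}\cdot \sigma_{12}^* = \sigma_{12}^*$, and $\sigma_{23}\cdot \sigma_{13}^* = \sigma_{12}^* + \sigma_{13}^*$. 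Iterating and summing from $i=0$ to $p-1$, the two resulting sums carry coefficients $p$ and $p(p-1)/2$, both of which vanish in $\F_p$ because $p$ is odd, and the weight-$0$ image is zero. Combining the three cases yields $\on{res}^U_N(\smash{\overbar{H}}^2(U, A))^T = 0$. The main obstacle is the bookkeeping in this last case; the essential input is the oddness of $p$, so that $p(p-1)/2 \equiv 0 \pmod p$, consistent with the hypothesis $p \neq 2$ of \Cref{thm-gl3}.
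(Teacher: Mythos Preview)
Your proof is correct and follows the same overall strategy as the paper: reduce via \Cref{claim-3} and semisimplicity of $\F_p[T]$ to computing each of $\on{res}^U_N\circ\varphi_U$, $\on{res}^U_N\circ\varphi_N$, $\psi_Z$ on the weight-$0$ part of its domain, then use the explicit $T$-weight decompositions and the identity $\on{res}^U_N\circ\on{cor}^N_U=N_{U/N}$. The only tactical differences are that for $\psi_Z$ you kill $E_{13}\otimes\partial(\sigma_{13}^*)$ via the vanishing of the transfer $N\to Z$ (the paper instead uses $\on{cor}\circ\on{res}=[N:Z]=p$), and for $\varphi_N$ you dispatch the generator $E_{12}\otimes\partial(\sigma_{12}^*)$ by the same direct $N_S$-summation used for $E_{13}\otimes\partial(\sigma_{13}^*)$, whereas the paper reduces it to the already-handled $\varphi_U$ case via the projection formula; both variants are valid and of comparable length.
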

		
		\begin{proof}
			As $p$ does not divide the order of $T$, taking $T$-invariants is exact on $T$-representations with $\F_p$ coefficients. Thus, in view of \Cref{claim-3}, in order to prove that $\on{res}^U_N(\smash{\overbar{H}}^2(U,A))^T=0$ it suffices to show that $\on{Im}(\on{res}^U_N\circ\varphi_U)^T$, $\on{Im}(\on{res}^U_N\circ\varphi_N)^T$ and $\on{Im}(\psi_Z)^T$ are trivial. 
			
			We first show that $\on{Im}(\on{res}^U_N\circ\varphi_U)^T=0$. The map $\varphi_U$ is given by cup product, and hence it is $T$-equivariant. Thus $\on{res}^U_N\circ\varphi_U$ is $T$-equivariant. A matrix computation shows that $A^U=\langle I, E_{13}\rangle$. Moreover, letting $\partial\colon H^1(U,\Q/\Z)\xrightarrow{\sim} H^2(U,\Z)$ be the connecting homomorphism for $0\to \Z\to \Q\to \Q/\Z\to 0$, we have \[H^2(U,\Z)=\F_p\cdot\partial(\chi_{12})\oplus \F_p\cdot\partial(\chi_{23}),\]
			where $\chi_{ij}\colon U\to \F_p$ is the projection to the $(i,j)$-th coordinate. The $T$-action on $H^1(U,\Q/\Z)=\on{Hom}(U,\Q/\Z)$ is induced by the $T$-action on $U$ and the trivial action on $\Q/\Z$: in formulas, for every character $\chi\colon U\to \F_p$, every $t\in T$, and every $u\in U$, we have $(t\cdot \chi)(u)=\chi(t^{-1}ut)$. By \cite[Proposition 1.5.2]{neukirch2008cohomology}, the connecting homomorphism $\partial$ is $T$-equivariant, and hence $H^2(U,\Z)\cong \F_p(\tau_{21})\oplus\F_p(\tau_{32})$ as a $T$-representation. Therefore, as $T$-representations,
			\begin{align*}
				A^U\otimes H^2(U,\Z)&\cong (\F_p\oplus \F_p(\tau_{13}))\otimes (\F_p(\tau_{21})\oplus \F_p(\tau_{32})) \\
				&\cong \F_p(\tau_{21})\oplus \F_p(\tau_{32})\oplus\F_p(\tau_{23})\oplus \F_p(\tau_{12}).
			\end{align*}
			In particular $(A^U\otimes H^2(U,\Z))^T=0$, and hence $\on{Im}(\on{res}^U_N\circ\varphi_U)^T=0$.
			
			Next, we show that $\on{Im}(\on{res}^U_N\circ\varphi_N)^T=0$. Let $S\subset U$ be the subgroup generated by $\sigma_{23}$. Then $N\cap S=\{1\}$ and $U=NS$, so that by \cite[Corollary 1.5.7]{neukirch2008cohomology} the map $\on{res}^U_N\circ\on{cor}^N_U\colon H^2(N,A)\to H^2(N,A)$ is given by multiplication by the $S$-norm $N_S\coloneqq \sum_{s\in S}s$. Since $T$ normalizes $S$, we deduce that $\on{res}^U_N\circ\on{cor}^N_U$ is $T$-equivariant, and hence that $\on{res}^U_N\circ \varphi_N\colon A^N\otimes H^2(N,\Z)\to H^2(N,A)$ is $T$-equivariant. 
			
			We have $A^N=\ang{I,E_{12},E_{13}}$ and $H^2(N,\Z)=\F_p\cdot\partial(\chi_{12})\oplus \F_p\cdot\partial(\chi_{13})$, where now $\partial$ denotes the connecting homomorphism $\partial\colon H^1(N,\Q/\Z)\xrightarrow{\sim} H^2(N,\Z)$. As $T$ normalizes $N$, the map $\partial$ is $T$-equivariant. It follows that, as $T$-representations,
			\[A^N\otimes H^2(N,\Z)\cong (\F_p\oplus \F_p(\tau_{12})\oplus \F_p(\tau_{13}))\otimes (\F_p(\tau_{21})\oplus \F_p(\tau_{31})).\]
			In particular, we have
			\[(A^N\otimes H^2(N,\Z))^T=\ang{E_{12}\otimes\partial(\chi_{12}),E_{13}\otimes\partial(\chi_{13})}.\]
			We have $s\cdot E_{13}=E_{13}$ for every $s\in S$. Moreover, a matrix computation shows that $\sigma_{23}\cdot\chi_{12}=\chi_{12}$ and $\sigma_{23}\cdot\chi_{13}=\chi_{13}+\chi_{12}$ in $H^1(N,\Q/\Z)$. As $S$ normalizes $N$, the map $\partial$ is $S$-equivariant. Therefore, using that $H^2(N,\Z)$ is $p$-torsion and $p$ is odd, we get
			\[N_S(\partial(\chi_{12}))=0,\quad N_S(\partial(\chi_{13}))=p\partial(\chi_{13})+\frac{p(p-1)}{2}\partial(\chi_{12})=0\quad\text{in $H^2(N,\Z)$.}\]
			All in all, we get
			\[N_S(E_{13}\cup\partial(\chi_{13}))=E_{13}\cup N_S(\partial(\chi_{13}))=0\quad \text{in $H^2(N,A)$.}\] 
			Since $\chi_{12}$ extends to a character of $U$, by the projection formula $\varphi_N(E_{12}\otimes\partial(\chi_{12}))$ belongs to the image of $\varphi_U$, and hence $\on{res}^U_N(\varphi_U(E_{12}\otimes\partial(\chi_{12})))=0$ by the previous case. Thus $\on{Im}(\on{res}^U_N\circ\varphi_N)^T=0$.
			
			Finally, we show that $\on{Im}(\psi_Z)^T=0$. Since $T$ normalizes $Z$ and $N$, by \cite[Proposition 1.5.4]{neukirch2008cohomology} the corestriction $H^2(Z,A)\to H^2(N,A)$ is $T$-equivariant, and hence so is $\psi_Z$. Observe that $A^Z$ is contained in the subspace of upper-triangular matrices. We have $H^2(Z,\Z)=\F_p\cdot\partial(\chi_{13})$, where now $\partial$ denotes the connecting homomorphism $\partial\colon H^1(Z,\Q/\Z)\xrightarrow{\sim}H^2(Z,\Z)$. As $T$ normalizes $Z$, the map $\partial$ is $T$-equivariant, and hence $H^2(Z,\Z)\cong \F_p(\tau_{13})$. It follows that 
			\[(A^Z\otimes H^2(Z,\Z))^T=\ang{E_{13}\otimes \partial(\chi_{13})}.\] 
			(In fact, when $p>3$, we even have $(A\otimes H^2(Z,\Z))^T=\ang{E_{13}\otimes \partial(\chi_{13})}$. When $p=3$, we have $(A\otimes H^2(Z,\Z))^T=\ang{E_{13}\otimes \partial(\chi_{13}),E_{31}\otimes\partial(\chi_{13})}$, but $E_{31}$ does not belong to $A^Z$.) 
			Observe that $\chi_{13}\colon Z\to \Q/\Z$ is the restriction of $\chi_{13}\colon N\to \Q/\Z$. It follows that $E_{13}\cup \partial(\chi_{13})\in H^2(Z,A)$ is the restriction of $E_{13}\cup \partial(\chi_{13})$, viewed as an element of $H^2(N,A)$. Therefore
			\begin{align*}
				\psi_Z(E_{13}\otimes \partial(\chi_{13}))&=\on{cor}^Z_N(E_{13}\cup\partial(\chi_{13})) \\
				&=\on{cor}^Z_N(\on{res}^N_Z(E_{13}\cup\partial(\chi_{13}))) \\
				&=p(E_{13}\cup\partial(\chi_{13}))\\
				&=0.
			\end{align*}
			This proves that $\on{Im}(\psi_Z)^T=0$ as well.
		\end{proof}
		
		\Cref{claim-1} and \Cref{claim-4} imply that $\alpha$ does not belong to $\smash{\overbar{H}}^2(U,A)$. This completes the proof.
	\end{proof}

	\begin{rmk}
		(1) For $p>3$, one can simplify the proof of \Cref{thm-gl3}. Indeed, let $Z$ be the center of $U=U_3(\F_p)$, let $A=\mathfrak{gl}_3(\F_p)$, let $\alpha\in H^2(U,A)$ be the class of (\ref{gl-eq}) for $n=3$ viewed as an $U$-module. Since $p>3$, the restriction of $\alpha$ to $H^2(Z,A)$ is non-zero; see the proof of \Cref{claim-1}. Moreover, again because $p>3$, by \Cref{matrix-computation} we have $N_Z(A)=0$, and using this and the double coset formula \cite[Proposition 1.5.6]{neukirch2008cohomology} one can show that all classes in $\smash{\overbar{H}}^2(U,A)$ restrict to zero in in $H^2(Z,A)$. This proves that $\alpha$ is not negligible over $F$ for $p>3$. This argument does not work for $p=3$ because $\on{res}^U_Z(\alpha)=0$ in $H^2(Z,A)$ in this case.
		
		(2) If the restriction of the class $\alpha$ to $H^2(N,A)$ were not negligible over $F$, one could use this to simplify the proof of \Cref{thm-gl3}. However, this is not the case: one can show that $\alpha$ restricts to a negligible class in $H^2(N,A)$.
	\end{rmk}

	\begin{proof}[Proof of \Cref{thm-galois-rep}]
		Let $U\coloneqq U_3(\F_p)\subset \on{GL}_3(\F_p)$, and consider the following commutative diagram with exact rows
		\begin{equation}\label{u-gl3}
			\begin{tikzcd}
				0 \arrow[r] & \mathfrak{gl}_3(\F_p) \arrow[r] \arrow[d,equal] & \tilde{U} \arrow[r] \arrow[d,hook] & U \arrow[r] \arrow[d,hook] &1 \\
				0 \arrow[r] & \mathfrak{gl}_3(\F_p) \arrow[r] & \on{GL}_3(\Z/p^2\Z) \arrow[r] & \on{GL}_3(\F_p) \arrow[r] & 1.
			\end{tikzcd}
		\end{equation}
		(The top row is in fact (\ref{restriction-alpha}).)    Let $V$ be a $p$-dimensional faithful representation of $U$ over $F$, and let $L/K$ be the generic Galois $U$-extension $F(V)/ F(V)^U$. Since $F$ contains a primitive $p$-th root of unity, by a theorem of Chu and Kang \cite[Theorem 1.6]{chu2001rationality} the field extension $K/F$ is purely transcendental, that is, $K\cong F(x_1,\dots,x_p)$, where the $x_i$ are algebraically independent over $F$.
		
		Embed $L/K$ into a separable closure of $K$, consider the corresponding continuous homomorphism $\rho_U\colon \Gamma_K\to U$, and write $\rho\colon \Gamma_K\to \on{GL}_3(\F_p)$ for the composite of $\rho_U$ and the inclusion $U\hookrightarrow \on{GL}_3(\F_p)$. We claim that $\rho$ does not lift to $\on{GL}_3(\Z/p^2\Z)$. Indeed, if $\rho$ lifts to $\on{GL}_3(\Z/p^2\Z)$, then $\rho_U$ lifts to $\tilde{U}$, that is, the pullback to $H^2(K,A)$ of the class of the top row of (\ref{u-gl3}) is trivial. Since $L/K$ is a generic Galois $U$-extension, by \Cref{connection} this implies that the top row of (\ref{u-gl3}) is negligible over $F$. As the top row of (\ref{u-gl3}) is the pullback of the bottom row and $U$ is a $p$-Sylow subgroup of $\on{GL}_3(\F_p)$, by \Cref{props}(4) the bottom row of (\ref{u-gl3}) is negligible over $F$, contradicting \Cref{thm-gl3}. (More simply, we could have noticed that the bulk of the proof of \Cref{thm-gl3} consisted in proving that the top row of (\ref{u-gl3}) is not negligible over $F$.) Thus $\rho$ does not lift to $\on{GL}_3(\Z/p^2\Z)$, as claimed. 
		
		For every $n\geq 3$, let $\rho_n\colon \Gamma_K\to \on{GL}_n(\F_p)$ be the composite of $\rho$ and the inclusion $\on{GL}_3(\F_p)\hookrightarrow \on{GL}_n(\F_p)$ as the top-left $3\times 3$ block. By \Cref{reduce-n-3}, for all $n\geq 3$, the homomorphism $\rho_n$ does not lift to $\on{GL}_n(\Z/p^2\Z)$.
	\end{proof}

\end{document}